\newcommand\si{\sigma}
\newcommand\na{\nabla}
\newcommand\va{\varepsilon}
\newcommand\Om{\Omega}
\newcommand\gl{\geqslant}
\newcommand\ls{\leqslant}
\newcommand\om{\omega}
\newcommand\ri{\rightarrow}
\newcommand\lge{\langle}
\newcommand\rge{\rangle}
\newtheorem{remark}{Remark}[section]
\newtheorem{lemma}{Lemma}[section]
\newtheorem{definition}{Definition}[section]
\newtheorem{theorem}{Theorem}[section]
\title[Sign-changing solutions of variational inequality]
{Sign-changing solutions of variational inequality}
\author[Xu Xian, Xian Xu]{}
\thanks{This work is supported by the NSFC 11871250.}
\thanks{$*$ the corresponding author. }
\begin{document}
 \maketitle

\centerline{\scshape Xian Xu$^1$,  Taotao Wang$^1$, Baoxia Qin$^{2,*}$ }
\medskip
{\footnotesize
 \centerline{$^1$School of Mathematics and Statistics, Jiangsu Normal University }
  \centerline{Xuzhou, Jiangsu 221116, China}

  \centerline{$^2$School of Mathematics,
Qilu Normal  University }
  \centerline{Jinan,
Shandong,
250013,  P. R. China} }


\medskip
\begin{abstract}
Variational inequality problems have extensive and important applications.  In this paper, we used penalty method and  the method of invariant set of descending flow to obtain the existence results for solutions of a variational inequality. In particular, we obtain the existence results for sign-changing solutions of variational inequalities for the first time based on the method of invariant set of descending flow.
\vskip 0.5cm
\noindent {\sl MSC}: 49J40, 47J30, 35J86, 58E05

\vskip 0.3cm
\noindent {\sl Keywords\/}: Variational inequality, Critical point, Sign-changing solution.

\end{abstract}


\renewcommand{\theequation}{\thesection.\arabic{equation}}
\setcounter{equation}{0}

\section{Introduction}
\rm In this paper, we will study the following variational inequality
\begin{equation*}
\mbox{(VI)}\left\{
\begin{array}{ll}
u\in H_0^1(\Omega)&\\
\int_\Omega\nabla u(x)\cdot\nabla\left(v(x)-u(x)\right)\gl\int_\Omega
p\left(x,u(x)\right)\left(v(x)-u(x)\right),$${\forall}$$ v\in H_0^1(\Omega)&\\
u(x),v(x)\ls\psi(x),x\in \Omega,
\end{array}
\right.
\end{equation*}
where $\Omega$ is a bounded domain of $\mathbb{R}^N$, with smooth boundary $\partial\Omega, \psi:\Omega\to\mathbb{R}$ is the obstacle, $\psi\in H_0^1(\Omega)$ and $\psi|_{\partial\Omega}\gl 0$.

Variational inequalities (VIs) were initially introduced and studied by Stampacchia [7] in
1964. Since then, VIs have extensively been studied and applied to a large variety of applied
problems arising from structural analysis, economics, optimization, management science,
operations research and engineering sciences. Many results have been obtained for the existence of positive solutions or non-trivial solutions to VIs in the past three decades. For example, for the above variational inequality, M.Girardi, L.Mastroeni and M.Matzeu gave the existence result of its positive solutions. They proved  in [6] the existence of non-negative solutions for  variational inequality (VI) through the use of some estimates for the Mountain-Pass critical points obtained for the penalized equations associated with the (VI), and then proved the positivity of solution by a regularity result and the strong maximum principle. For more results on the existence of positive solutions or non-trivial solutions to VIs, one can refer to literature [1-6].

The main purpose of the present paper is to give some results for the existence of sign-changing solutions of the (VI). In the last two decades, people have extensively studied the existence of sign-changing solutions for various kinds of differential boundary value problems, see [9-13]. However, to the best of our knowledge, no one has studied the sign-changing solutions of (VIs). We need to point out that the study of the sign-changing solutions of VIs has extensive and important practical significance. Now, let's look at a practical example of studying the sign-changing solution of a variational inequality. Let us suppose that we have a beam that is fixed at one end, and there is a support underneath it, which exerts a force on the other end. When the force is small, the beam simply bends, and the variational inequality describing the beam bending  would have a positive solution. On the other hand, when the force is very large, the beam will have a complex bending shape, and the variational inequality describing the beam bending would have a  sign-changing solution.

 To show the existence of sign-changing solution to the variational inequality (VI),  we will use the penalty method, and the method of invariant set of descending flow proposed by Sun Jingxian; see [8]. In the last two decades, people have extensively studied the existence of sign-changing solutions for various kinds of differential boundary value problems by using the  method of invariant set of descending flow. This method has a simple and intuitive geometric meaning, and is especially suitable for proving the existence of the sign-changing solution.

Now let us introduce the main results of this paper. Throughout the paper we denote by $H_0^1(\Omega)$ the usual Sobolev space equipped with the inner product $\left\langle u,v\right\rangle=\int_{\Omega}\nabla u\cdot \nabla v\,dx$ and the norm$$\|u\|=\left(\int_\Omega|\nabla u|^2\,dx\right)^\frac{1}{2}$$
and by $L^q(\Omega)$, with $q\in [1,+\infty)$, the usual Lebesgue space with the norm defined as$$|u|_q=\left(\int_\Omega|u|^q\,dx\right)^\frac{1}{q}.$$
Then,  $H_0^1(\Omega)$ compact embedded $L^q(\Omega)$ for $q\in(2,2^*)$, where $2^*=\frac{2N}{N-2}$. We shall denote by $\mathcal{S}_q$ the imbedding constant with $|u|_q\ls \mathcal{S}_q\|u\|~~~~~~~~~~\text{for all}~~u\in H_0^1(\Omega)$.

Let $P_0=\left\{u\in H_0^1(\Omega):u(x)\gl 0~ \ \text{a.e. in}~\Omega\right\}$, then $P_0$ is a closed convex cone on $H_0^1(\Omega)$. We also denote by $X=C_0^1(\Omega)=\left\{u\in C^1(\Omega):u(x)=0,\forall x\in\partial\Omega\right\},$
and let $P=P_0\cap X=\left\{u\in X:u(x)\gl 0\  \text{on}~\bar\Omega\right\}$.

We denote by $v^+$ and $v^-$ respectively the positive and negative part of a function $v$, that is $v^+=\max\{v,0\}$ and $v^-=\max\{-v,0\}$.

Denote the first eigenvalue of Laplacian operator $-\Delta$ as $\lambda_1$, that is
$$\ \lambda_1=\inf\limits_{u\in H_0^1(\Omega)\backslash\{0\}}\frac{\int_\Omega |\nabla u|^2\,dx}{\int_\Omega |u|^2\,dx}.$$
Denote the second eigenvalue  of $-\Delta$ as $\lambda_2$, and let $\varphi_1,\varphi_2$ be the eigenfunctions corresponding to $\lambda_1,\lambda_2$ respectively. Let $Y=\text{span}\{\varphi_1,\varphi_2\}$.

For function $p:\bar\Omega\times \mathbb{R}\to \mathbb{R}$ we makes the following assumptions:

${\rm{(}}{{\rm{H}}_{\rm{1}}}{\rm{)}}$~ $p(x,\xi)$ is measurable in $x\in\bar{\Omega}$,  continuous in~$\xi\in \mathbb{R}$, and for some ~$a_1,a_2>0$, $$|p\left(x,\xi\right)|\ls a_1+a_2|\xi|^s,\ \forall(x,\xi)\in \bar\Omega\times \mathbb{R}$$
with $1<s<\frac{N+2}{N-2}=2^*-1$ if $N\gl 3$, and $s>1$ If $N=2$;

${\rm{(}}{{\rm{H}}_{\rm{2}}}{\rm{)}}$~ $p\left(x,\xi\right)=o(|\xi|)$ as $\xi \to 0$;

${\rm{(}}{{\rm{H}}_{\rm{3}}}{\rm{)}}$~ $\xi p(x,\xi)>0$ for any $\xi\neq 0$;

${\rm{(}}{{\rm{H}}_{\rm{4}}}{\rm{)}}$~ There exists $r>0$, so that for all $ |\xi|\gl r, x\in \bar\Omega$, $$0<(s+1)P(x,\xi)\ls\xi p(x,\xi),$$
where $P(x,\xi)=\int_0^{\xi} p(x,t)\,dt$ for $x\in \bar\Omega$;

Before giving the next hypothesis, we can first determine two constants $a_3,a_4>0$, from ${\rm{(}}{{\rm{H}}_{\rm{4}}}{\rm{)}}$, such that for all $(x,\xi)\in\bar{\Omega}\times \mathbb{R}$,
\begin{equation}\label{tao}
P\left(x,\xi\right)\gl a_3|\xi|^{s+1}-a_4.
\end{equation}

Let
$$Q(u)=\frac{1}{2}\|u\|^2-
a_3\int_\Omega|u|^{s+1}\,dx+a_4|\Omega| \ \mbox{for}\ u\in H_0^1(\Omega).$$
${\rm{(}}{{\rm{H}}_{\rm{5}}}{\rm{)}}$~  There exists $r_1>0$, such that $u\ls\psi$ for all $u\in  \bar B(0,r_1)\cap Y$, and $Q(u)<0$ for all $u\in \partial B(0,r_1)\cap Y$.

\begin{remark}\label{R13}
\rm According to ${\rm{(}}{{\rm{H}}_{\rm{1}}}{\rm{)}}$ and ${\rm{(}}{{\rm{H}}_{\rm{2}}}{\rm{)}}$, we have for $(x,u)\in \bar{\Omega}\times \mathbb{R}$,
\begin{equation}\label{11}
\forall \delta >0,\exists C_\delta >0, |p\left(x,\xi\right)|\ls \delta|\xi|+C_\delta|\xi|^s.
\end{equation}
By integrating both sides of the inequality \eqref{11} over $\xi$, we have for $(x,u)\in \bar{\Omega}\times \mathbb{R}$,
\begin{equation}\label{12}
|P\left(x,\xi\right)|\ls \delta|\xi|^2+C_\delta|\xi|^{s+1}.
\end{equation}
\end{remark}

In this paper, we have the following main result.

\begin{theorem}\label{d11}
Let ${\rm{(}}{{\rm{H}}_{\rm{1}}}{\rm{)}}$~$\sim~
$${\rm{(}}{{\rm{H}}_{\rm{5}}}{\rm{)}}$ hold, and $s<2$, then the variational inequality (VI) has at least one positive solution, one negative solution and one sign-changing solution.
\end{theorem}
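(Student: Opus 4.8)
The plan is to use the penalty method to convert the variational inequality (VI) into a family of semilinear elliptic equations, apply the method of invariant sets of descending flow to the penalized problems to produce three solutions with the correct sign behavior, and then pass to the limit as the penalty parameter degenerates. Concretely, for $\varepsilon>0$ I would introduce a smooth penalty function $\beta_\varepsilon$ that vanishes for arguments below the obstacle and grows when the constraint $u\le\psi$ is violated, and consider the penalized equation $-\Delta u_\varepsilon = p(x,u_\varepsilon)-\beta_\varepsilon(u_\varepsilon-\psi)$ in $H_0^1(\Omega)$. This is the Euler--Lagrange equation of a $C^1$ functional $I_\varepsilon$ on $H_0^1(\Omega)$, and assumptions $(\mathrm{H}_1)$--$(\mathrm{H}_4)$ give the standard mountain-pass geometry together with the Ambrosetti--Rabinowitz superquadraticity condition, so $I_\varepsilon$ satisfies the Palais--Smale condition uniformly enough to control the critical values.

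Next I would set up the invariant-set machinery of Sun Jingxian [8]. The key is to construct a pseudo-gradient / descending flow whose associated cone $P$ (and its negative $-P$) are positively invariant: using the regularity theory for $-\Delta$ and the sign condition $(\mathrm{H}_3)$, one shows that the resolvent-type operator $A_\varepsilon = (-\Delta)^{-1}\big(p(\cdot,u)-\beta_\varepsilon(u-\psi)\big)$ maps a neighborhood of $P$ into $P$ (and likewise for $-P$), so that $D^+ := $ a suitable open neighborhood of $P$ and $D^- := -D^+$ are invariant under the descending flow. A critical point lying in $D^+\cap D^-$ will automatically be zero-order away from both cones, forcing the third solution to be sign-changing. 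The mountain-pass level, constructed so that the linking set sits inside $Y=\mathrm{span}\{\varphi_1,\varphi_2\}$ as dictated by $(\mathrm{H}_5)$, produces a critical point outside $D^+\cup D^-$; this is precisely where $(\mathrm{H}_5)$ and the use of the second eigenfunction $\varphi_2$ enter, guaranteeing the min-max value corresponds to a genuinely sign-changing critical point rather than collapsing onto $\pm P$.

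With three critical points $u_\varepsilon^+\in P$, $u_\varepsilon^-\in -P$, and $u_\varepsilon^\ast$ sign-changing obtained for each $\varepsilon$, the final step is the limit $\varepsilon\to 0^+$. Using the Ambrosetti--Rabinowitz condition $(\mathrm{H}_4)$ and the growth bound $(\mathrm{H}_1)$ I would derive $\varepsilon$-uniform a priori bounds on $\|u_\varepsilon\|$, hence uniform bounds on the penalty term $\int_\Omega\beta_\varepsilon(u_\varepsilon-\psi)(u_\varepsilon-\psi)$, extract weakly convergent subsequences, and show the limits solve (VI) while satisfying the constraint $u\le\psi$ almost everywhere. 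The restriction $s<2$ should be exactly what is needed to guarantee the penalty-term remainders vanish in the limit and that the sign-changing character (e.g.\ a quantitative lower bound on $\|u_\varepsilon^{\ast,\pm}\|$ coming from $(\mathrm{H}_2)$) survives passage to the limit, preventing the sign-changing solution from degenerating to a one-signed or trivial limit.

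I expect the main obstacle to be the compatibility of the penalty approximation with the invariant-cone structure: one must choose $\beta_\varepsilon$ and the descending flow so that the invariance of $P$ and $-P$ holds \emph{uniformly in} $\varepsilon$, and simultaneously ensure that the sign-changing min-max level stays bounded away from the levels of the one-signed solutions so that no collapse occurs in the limit. The delicate interplay between the obstacle $\psi$ (which is only assumed $\psi\in H_0^1(\Omega)$ with $\psi|_{\partial\Omega}\ge 0$, so $\psi$ need not be positive in the interior) and the positivity required for cone invariance is where the regularity arguments and the strong maximum principle must be applied with care.
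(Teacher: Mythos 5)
Your proposal follows essentially the same route as the paper: the penalty term $\frac{1}{\varepsilon}(u-\psi)^+$, the invariant cones $\pm P$ under the descending flow of $A_\varepsilon=K_\varepsilon F_\varepsilon$, the use of $(\mathrm{H}_5)$ on $Y=\mathrm{span}\{\varphi_1,\varphi_2\}$ to force a critical point outside $P\cup(-P)$, the $\varepsilon$-uniform bounds coming from $(\mathrm{H}_4)$ together with $s<2$, and the lower bounds on $\|\omega_n^{\pm}\|$ derived from $(\mathrm{H}_2)$ that keep the limit genuinely sign-changing. The only notable (but non-essential) differences are that the paper isolates the sign-changing critical point by a connectedness argument (a connected component of $\partial C_X(D_\varepsilon)$ meeting every ray in $Y$, Lemma 2.4) rather than a min-max level, and that it shifts the resolvent to $(-\Delta+\varepsilon^{-1})^{-1}$ so that $F_\varepsilon u=\varepsilon^{-1}u-\varepsilon^{-1}(u-\psi)^++p(x,u)$ maps $P$ into $P$.
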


\renewcommand{\theequation}{\thesection.\arabic{equation}}
\setcounter{equation}{0}


\section{Sign-changing solutions for penalty problems}   \setcounter{equation}{0}
\rm In this section, we construct a penalty problem $(E)_\va$ related to (VI), and use the method of invariant set of descending flow to study the existence of sign-changing solutions for the penalty problem $(E)_\va$. During the proof process, we also give the existence results for positive and negative solutions for the problem $(E)_\va$.

First, we introduce the penalty problem related to the variational inequality $(\mbox{VI})$, that is, for any $\varepsilon>0$,
\begin{equation*}
\mbox{(E)}_\va\left\{
\begin{array}{ll}
u\in H_0^1(\Omega)&\\
\int_\Omega\nabla u\cdot\nabla v+\frac{1}{\varepsilon}\int_\Omega\left(u-\psi\right)^+v=
\int_\Omega p\left(x,u(x)\right)v,$${\forall}$$ v\in H_0^1(\Omega),\\
\end{array}
\right.
\end{equation*}
where $\left(u-\psi\right)^+$ denotes the positive part of the function $u-\psi$.

Let $Z_\varepsilon=H_0^1(\Omega)$ with the new norm:$$\|u\|_\varepsilon=\left(\int_\Omega(|\nabla u|^2+\frac{1}{\varepsilon}u^2)\,dx\right)^\frac{1}{2}.$$
Evidently $H_0^1(\Om)$ and $Z_\varepsilon$ are homeomorphic, $\|u\|$ is equivalent to $\|u\|_\varepsilon$.
Related to the  penalty problem $(E)_\va$, we consider the following functional:
\begin{equation}\label{25}
I_\varepsilon(u)=\frac{1}{2}\|u\|_{\varepsilon}^2+
\frac{1}{2\varepsilon}\int_\Omega
\left(\left(u-\psi(x)\right)^+\right)^2\,dx-\int_\Omega P(x,u)-\frac{1}{2\varepsilon}|u|_2^2.
\end{equation}

Let us define the nonlinear operator $A_\va: X\ri X$ by $A_\varepsilon=K_\varepsilon F_\varepsilon$,  where $K_\varepsilon: C(\bar\Om)\ri X$ is the solution operator of the following  boundary value problem:
\begin{equation*}
\left\{
\begin{array}{ll}
-\Delta u+\frac{1}{\varepsilon}u=v,&x\in \Omega,\\
u=0,&x\in \partial\Omega,\\
\end{array}
\right.
\end{equation*}
and $F_\varepsilon: X\to C(\bar\Omega)$ is a nonlinear operator defined by $$F_\varepsilon u=p(x,u)+\frac{1}{\varepsilon}u-\frac{1}{\varepsilon}\left(u-\psi\right)^+,  \forall u\in X.$$ Obviously, the gradient operator of $I_\varepsilon(u)$ with respect to the norm $\|\cdot\|_\varepsilon$ is $I'_\va(u)=u-A_\varepsilon u$ for any $u\in X$.

Consider the initial value problem
\begin{equation}\label{23}
\left\{
\begin{array}{ll}
\displaystyle{\frac{\,du}{\,dt}}=A_{\varepsilon} u-u,&\\
u(0)=u_{0}\in X\backslash K,\\
\end{array}
\right.
\end{equation}
where $K=\{u\in X:u=A_\varepsilon u\}$. Since $A_\varepsilon$ is Lipschitz continuous on $X$, the solution of \eqref{23} uniquely exists. Let $\left[0,\eta(u_0)\right)$ be the maximal right existence interval of the  solution $u(t,u_0)$, here $\eta(u_0)\ls +\infty$.
\vskip 0.1in

It follows from [8] we have the following definition.

\begin{definition}
\rm The nonempty subset $M$ of $X$ is called a descending flow invariant set of \eqref{23} if $$\{u(t,u_0):t\in\left[0,\eta(u_0)\right)\}\subset M,\forall u_0\in M\backslash K.$$
\end{definition}

 We say that the functional $I_\varepsilon$ satisfies the $(PS)$ condition, if for any sequence $\{u_n\}\subset Z_\va$, $\{I_\varepsilon(u_n)\}$ is bounded, and $I_\varepsilon^{\prime}(u_n)\to 0$ as $n\to \infty$, then $\{u_n\}$ has at least one convergent subsequence in $Z_\va$.

By a usual way we can easily show the following Lemma 2.1.

\begin{lemma}\label{y21}
$I_\varepsilon$ satisfies the $(PS)$ condition on $Z_\varepsilon$ for each $\va>0$.
\end{lemma}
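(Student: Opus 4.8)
The plan is to run the classical two-stage argument for the Palais--Smale condition: first show that an arbitrary (PS) sequence is bounded in $H_0^1(\Om)$, then upgrade weak convergence to strong convergence via the compact Sobolev embedding. A convenient preliminary observation is that, since $\|u\|_\va^2=\|u\|^2+\frac1\va|u|_2^2$, the two $|u|_2^2$-terms in \eqref{25} cancel, so that
$$I_\va(u)=\frac12\|u\|^2+\frac1{2\va}\int_\Om\big((u-\psi)^+\big)^2\,dx-\int_\Om P(x,u)\,dx,$$
whose Fr\'echet derivative acts by $\langle I_\va'(u),v\rangle=\langle u,v\rangle+\frac1\va\int_\Om(u-\psi)^+v-\int_\Om p(x,u)v$ for all $v$.

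For the boundedness step, let $\{u_n\}\subset Z_\va$ satisfy $|I_\va(u_n)|\ls C$ and $I_\va'(u_n)\to0$. Following the Ambrosetti--Rabinowitz scheme I would form the combination $I_\va(u_n)-\frac1{s+1}\langle I_\va'(u_n),u_n\rangle$. The quadratic part produces $(\tfrac12-\tfrac1{s+1})\|u_n\|^2$, which is coercive because $s>1$ forces $s+1>2$. For the nonlinear part, $-P(x,u_n)+\frac1{s+1}p(x,u_n)u_n\gl0$ on $\{|u_n|\gl r\}$ by ${\rm(H_4)}$, while on $\{|u_n|<r\}$ the integrand is uniformly bounded by continuity, so this term is bounded below by a constant. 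For the penalty part I would use that $(u-\psi)^+u=\big((u-\psi)^+\big)^2+(u-\psi)^+\psi$ on the set where $(u-\psi)^+>0$: the squared piece again carries a favourable sign since $s+1>2$, and the cross term $\frac1{\va(s+1)}\int_\Om(u-\psi)^+\psi$ is absorbed by Young's inequality into a fraction of the squared piece plus a constant multiple of $|\psi|_2^2$. Collecting everything yields $(\tfrac12-\tfrac1{s+1})\|u_n\|^2\ls C+o(\|u_n\|)$, where the $o(\|u_n\|)$ comes from $|\langle I_\va'(u_n),u_n\rangle|\ls\|I_\va'(u_n)\|_\va\|u_n\|_\va$ and norm equivalence, hence $\sup_n\|u_n\|<\infty$.

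For the strong convergence step, along a subsequence $u_n\rightharpoonup u$ in $H_0^1(\Om)$, and the compact embedding $H_0^1(\Om)\hookrightarrow L^q(\Om)$ for $q\in[2,2^*)$ gives $u_n\to u$ in $L^2$ and in $L^{s+1}$ (note $2<s+1<2^*$ by ${\rm(H_1)}$). Testing against $u_n-u$, I would write $\langle I_\va'(u_n),u_n-u\rangle=\|u_n-u\|^2+\langle u,u_n-u\rangle+\frac1\va\int_\Om(u_n-\psi)^+(u_n-u)-\int_\Om p(x,u_n)(u_n-u)$, where $\langle u,u_n-u\rangle\to0$ by weak convergence. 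The penalty integral is controlled by Cauchy--Schwarz, since $(u_n-\psi)^+$ is bounded in $L^2$ while $u_n-u\to0$ in $L^2$; the $p$-integral vanishes by ${\rm(H_1)}$ and H\"older, as $|p(x,u_n)|\ls a_1+a_2|u_n|^s$ is bounded in $L^{(s+1)/s}$ and $u_n-u\to0$ in $L^{s+1}$. Since the left-hand side tends to $0$, I conclude $\|u_n-u\|\to0$, i.e.\ convergence in $Z_\va$ by equivalence of $\|\cdot\|$ and $\|\cdot\|_\va$.

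The main obstacle is the boundedness step, and inside it the correct treatment of the penalty cross term: one must exploit that $s+1>2$ so that the penalty's quadratic part inherits the same favourable sign as the Laplacian part, with the remaining sub-quadratic contribution absorbable, while the super-$(s+1)$-linearity in ${\rm(H_4)}$ is what makes the full combination coercive. The remaining weak-to-strong passage is routine and rests entirely on the compact Sobolev embedding together with the subcritical growth in ${\rm(H_1)}$.
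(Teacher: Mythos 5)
Your argument is correct and is exactly the ``usual way'' the paper alludes to: the paper itself gives no proof of Lemma \ref{y21}, dismissing it with the remark that it can be shown in a standard manner, so your write-up simply supplies the omitted details. The key points are all in order --- the cancellation of the two $|u|_2^2$-terms, the Ambrosetti--Rabinowitz combination $I_\va(u_n)-\frac{1}{s+1}\langle I_\va'(u_n),u_n\rangle$ with the penalty cross term $(u-\psi)^+\psi$ absorbed by Young's inequality (this is the one place where a careless argument could go wrong, and you handle it correctly), and the weak-to-strong upgrade via the compact embedding and the subcritical growth in ${\rm(H_1)}$.
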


\begin{lemma}\label{y22}
Let $\Lambda(u_0)=\left\{u(t,u_0):t\in \left[0,\eta(u_0)\right)\right\}$ be the orbits of \eqref{23} emitting from the initial point $u_0\in X$. Assume that $$\tilde{c}:=\inf\limits_{t\in\left[0,\eta(u_0)\right)}
I_\varepsilon\left(u(t)\right)>-\infty,$$ then the following two conclusions hold:\\
(1)\ there exists $\bar{u}_0\in K$, such that $\lim \limits_{t\to\eta^-(u_0)}u(t,u_0)=\bar{u}_0$, and $I_\varepsilon(\bar{u}_0)\ls I_\varepsilon(u_0)$;\\
(2)\  there exists a sequence $\{t_n\}\subset\left[0,\eta(u_0)\right)$ with $t_n\to\eta^-(u_0)$ as $n\to\infty$, such that $u(t_n,u_0)\to\bar{u}_0$ in $X$.
\end{lemma}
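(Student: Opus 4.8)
The plan is to read \eqref{23} as exactly the negative gradient flow of $I_\varepsilon$ in the inner product of $Z_\varepsilon$, so that $I_\varepsilon$ decreases along orbits and its dissipation is square-integrable in time. First I would record the energy identity. Since the $\|\cdot\|_\varepsilon$-gradient of $I_\varepsilon$ is $I'_\varepsilon(u)=u-A_\varepsilon u$ and $\dot u=A_\varepsilon u-u=-I'_\varepsilon(u)$, differentiating gives
\[
\frac{d}{dt}I_\varepsilon(u(t))=\langle I'_\varepsilon(u(t)),\dot u(t)\rangle_\varepsilon=-\|\dot u(t)\|_\varepsilon^2\leq 0 ,
\]
so $t\mapsto I_\varepsilon(u(t))$ is non-increasing. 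Under the hypothesis $\tilde c>-\infty$ it is bounded below, hence converges to some $c_\infty\geq\tilde c$ as $t\to\eta^-(u_0)$, and integrating the identity yields $\int_0^{\eta(u_0)}\|\dot u(\tau)\|_\varepsilon^2\,d\tau=I_\varepsilon(u_0)-c_\infty<\infty$. In particular $I_\varepsilon(u(t))\leq I_\varepsilon(u_0)$ for all $t$, which will supply the energy inequality in (1).

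To prove (2), I would use the finite dissipation integral to select good times. Since $\int_0^{\eta(u_0)}\|\dot u\|_\varepsilon^2\,d\tau<\infty$ and the integrand is non-negative, there is a sequence $t_n\to\eta^-(u_0)$ with $\|\dot u(t_n)\|_\varepsilon\to 0$, that is $I'_\varepsilon(u(t_n))=u(t_n)-A_\varepsilon u(t_n)\to 0$ in $Z_\varepsilon$, while $\{I_\varepsilon(u(t_n))\}$ is bounded (it converges to $c_\infty$). Thus $\{u(t_n)\}$ is a Palais--Smale sequence, and Lemma \ref{y21} furnishes a subsequence (not relabelled) converging in $Z_\varepsilon$ to some $\bar u_0$ with $I'_\varepsilon(\bar u_0)=0$, i.e. $\bar u_0=A_\varepsilon\bar u_0\in K$. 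To promote this to convergence in $X$ — which is what (2) asserts and what the cone structure later needs — I would invoke the smoothing of $K_\varepsilon$: writing $u(t_n)=A_\varepsilon u(t_n)-\dot u(t_n)$ and using that $A_\varepsilon=K_\varepsilon F_\varepsilon$ carries $Z_\varepsilon$-bounded sets into a space compactly embedded in $X$ (elliptic $W^{2,q}$-regularity together with the subcritical growth in $(\mathrm{H}_1)$ and a bootstrap), one obtains $A_\varepsilon u(t_n)\to A_\varepsilon\bar u_0=\bar u_0$ in $X$; controlling the residual term $\dot u(t_n)$ in the $X$-topology, rather than merely in $Z_\varepsilon$, is the delicate point and is exactly where the elliptic regularity has to be pushed.

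For (1) I must upgrade the subsequential limit to a genuine limit of the whole orbit. If $\eta(u_0)<\infty$ this is easy: by Cauchy--Schwarz $\int_0^{\eta(u_0)}\|\dot u\|_\varepsilon\,d\tau\leq\eta(u_0)^{1/2}\big(\int_0^{\eta(u_0)}\|\dot u\|_\varepsilon^2\,d\tau\big)^{1/2}<\infty$, so the orbit has finite length, is Cauchy in $Z_\varepsilon$, and converges; its limit must coincide with the critical point $\bar u_0$ from (2). The genuine difficulty — and the main obstacle — is the case $\eta(u_0)=\infty$, where square-integrability of $\|\dot u\|_\varepsilon$ does not by itself force convergence, since a priori the orbit could drift among critical points at level $c_\infty$. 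My plan here is the standard descending-flow argument: show that the $\omega$-limit set of $u_0$ is a non-empty, compact, connected subset of $K$ on which $I_\varepsilon\equiv c_\infty$, and then rule out drift by an energy-drop estimate. Concretely, if $u(t)\not\to\bar u_0$ there exist $\rho>0$ and times at which the orbit repeatedly crosses the annulus $\{\rho\leq\|u-\bar u_0\|_\varepsilon\leq 2\rho\}$; on this annulus the Palais--Smale condition yields a lower bound $\|I'_\varepsilon\|_\varepsilon\geq m>0$ at energies in $[c_\infty,I_\varepsilon(u_0)]$, and then each crossing costs $\int\|\dot u\|_\varepsilon^2\,dt\geq m\int\|\dot u\|_\varepsilon\,dt\geq m\rho$, so infinitely many crossings would drive $I_\varepsilon\to-\infty$, contradicting $c_\infty>-\infty$. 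Hence the full limit exists, equals $\bar u_0\in K$, and $I_\varepsilon(\bar u_0)=c_\infty\leq I_\varepsilon(u_0)$, which is (1). The step I expect to need the most care is precisely the positive lower bound $m>0$, i.e. choosing $\rho$ so that the annulus is free of critical points (isolation of the limit); if the critical set is not isolated one must instead argue convergence to the whole connected critical set and again use the compactness of $K_\varepsilon$ to keep the analysis in the $X$-topology.
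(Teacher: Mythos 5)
Your overall strategy coincides with the paper's: read \eqref{23} as the negative $Z_\varepsilon$-gradient flow of $I_\varepsilon$, derive the energy identity, conclude that the dissipation $\int\|I'_\varepsilon(u(t))\|_\varepsilon^2\,dt$ is finite, extract a Palais--Smale sequence of times, apply Lemma \ref{y21}, and use Cauchy--Schwarz to get the Cauchy property when $\eta(u_0)<\infty$. The genuine gap is exactly the step you flag yourself: upgrading $u(t_n)\to\bar u_0$ from $Z_\varepsilon$-convergence to $X$-convergence, which is the whole content of conclusion (2) and is what the cone arguments in Lemma \ref{y26} need. Your decomposition $u(t_n)=A_\varepsilon u(t_n)+I'_\varepsilon(u(t_n))$ cannot close this, because the residual $I'_\varepsilon(u(t_n))$ is controlled only in $Z_\varepsilon$, and no amount of regularity applied to $A_\varepsilon u(t_n)$ touches that term. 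The paper's device is different: it uses the variation-of-constants representation
\begin{equation*}
u(t)=e^{-t}u_0+\int_0^t e^{-(t-s)}A_\varepsilon(u(s))\,ds,
\end{equation*}
in which the only unsmoothed term is $e^{-t}u_0$ with $u_0$ already in $X$, and then bootstraps the integral term through a finite chain $X_0=W^{2,r}(\Omega)\hookrightarrow X_1\hookrightarrow\cdots\hookrightarrow X_m=Z_\varepsilon$ with $A_\varepsilon$ bounded from $X_i$ into $X_{i-1}$ (this is where the subcritical growth in $(\mathrm{H}_1)$ and $L^p$-elliptic estimates enter) and $X_0\hookrightarrow X$ compact. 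This shows $\{u(t_n)-e^{-t_n}u_0\}$ is relatively compact in $X$; since the limit is already identified as $\bar u_0$ in $Z_\varepsilon$, a subsequence of $\{u(t_n)\}$ converges to $\bar u_0$ in $X$. You should replace your decomposition by this integral formula; without it, conclusion (2) is not proved.

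On conclusion (1) in the case $\eta(u_0)=+\infty$, your annulus-crossing argument is actually \emph{more} explicit than the paper, which disposes of the full convergence $\lim_{t\to+\infty}u(t)=\bar u_0$ with the phrase ``by a usual way.'' But, as you correctly observe, the lower bound $m>0$ for $\|I'_\varepsilon\|_\varepsilon$ on the annulus requires the annulus to contain no critical points at levels near $c_\infty$, i.e.\ some isolatedness of $\bar u_0$ in $K\cap I_\varepsilon^{-1}(c_\infty)$, which does not follow from the hypotheses; without it one only obtains convergence to a compact connected subset of $K$. This caveat applies equally to the paper's own (unproved) claim. Note, however, that what the subsequent lemmas actually use is conclusion (2) together with the inequality $I_\varepsilon(\bar u_0)\leqslant I_\varepsilon(u_0)$, both of which your argument delivers once the $X$-convergence step is repaired as above.
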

\begin{proof}
According to the theories of ordinary equation in Banach spaces, we see $\Lambda(u_0)\subset X$. For brevity we denote $u(t,u_0)$ as $u(t)$ for $t\in\left[0,\eta(u_0)\right)$. Then we have for $t\in\left[0,\eta(u_0)\right)$,
\begin{equation}\label{24}
\frac{\,dI_\varepsilon(u(t))}{\,dt}=\lge I_\varepsilon^{\prime}(u(t)), u^{\prime}(t)\rge=
-\|I_\varepsilon^{\prime}(u(t))\|_\varepsilon^2\ls 0.
\end{equation}
This implies that $I_\varepsilon(u(t))$ is non-increasing along the obits. For any $0\ls t_1<t_2<\eta(u_0)$, by the  H\"{o}lder inequality we have
\begin{equation}\label{25}
\begin{aligned}
\|u(t_2)-u(t_1)\|_\varepsilon&\ls\int_{t_1}^{t_2}\|u^{\prime}(t)\|_\varepsilon\,dt=
\int_{t_1}^{t_2}\|I_\varepsilon^{\prime}(u(t))\|_\varepsilon\,dt\\
&\ls\left(\int_{t_1}^{t_2}\|I_\varepsilon^{\prime}(u(t))\|_\varepsilon^2\,dt\right)^
\frac{1}{2}(t_2-t_1)^\frac{1}{2}.
\end{aligned}
\end{equation}
It follows from \eqref{24} that
\begin{equation}\label{26}
\begin{aligned}
\int_{t_1}^{t_2}\|I_\varepsilon^{\prime}(u(t))\|_\varepsilon^2\,dt&=-\int_{t_1}^{t_2}
\frac{\,dI_\varepsilon(u(t))}{\,dt}\,dt\\
&=I_\varepsilon(u(t_1))-I_\varepsilon(u(t_2))\\
&\ls I_\varepsilon(u(t_0))-\tilde{c}<\infty.
\end{aligned}
\end{equation}
It follows from \eqref{25} and \eqref{26} that
\begin{equation}\label{27}
\|u(t_2)-u(t_1)\|_\varepsilon\ls\left(I_\varepsilon(u(t_0))-\tilde{c}\right)^\frac{1}{2}
(t_2-t_1)^\frac{1}{2}.
\end{equation}
If $\eta(u_0)<+\infty$, then
$$\|u(t_2)-u(t_1)\|_\varepsilon\to 0\ \mbox{as}\  t_1, t_2\to\eta^-(u_0),$$
and so $\lim\limits_{t\to\eta^-(u_0)}u(t)=
\bar{u}_0$ for some $\bar{u}_0\in K$. Obviously, for any $t_n\to\eta^-(u_0)$, we have $$\tilde{c}\ls I_\varepsilon(u(t_n))\ls I_\varepsilon(u_0)$$
and
\begin{equation}\label{28}
I_\varepsilon^{\prime}(u(t_n))\to 0~\text{in}~Z_\va~\text{as}~n\to \infty.
\end{equation}
If $\eta(u_0)=+\infty$, it follows from (2.5) that
$$\int_0^{+\infty} \|I_\varepsilon^{\prime}(u(t_n))\|_\varepsilon^2\,dt<+\infty.$$
So, there also exists a sequence $\{t_n\}\subset\left[0,+\infty\right)$ such that \eqref{28} holds. It follows from Lemma \ref{y21} that we may assume, up to a subsequence, that $u(t_n)\to\bar{u}_0$ in $Z_\varepsilon$ for some $\bar{u}_0\in K.$
By a usual way we can prove that $\lim\limits_{t\to +\infty}u(t)=\bar{u}_0$ in $Z_\varepsilon$.

Let $X_0=W^{2,r}(\Omega)$ for some $r\gl2$ such that the embedding from $X_0$ into $X$ is compact. Take a finite sequence of Banach spaces $\{X_i\}$ such that
$$X_0\hookrightarrow X_1\hookrightarrow X_2\hookrightarrow \cdots\hookrightarrow X_m=Z_{\varepsilon}$$
and $A_\varepsilon$ is continuous and bounded from $\{X_i\}$ into $X_{i-1}$ for $i=1,2,\cdots,m$. It is easy to see that  the orbit $\Lambda(u_0)$ is bounded in $Z_{\varepsilon}$. Assume that
$$\sup\limits_{t\in\left[0,\eta(u_0)\right)}\|u(t)\|_\varepsilon\ls R_m.$$
By a direct computation we have
\begin{equation}\label{212}
u(t)=e^{-t}u_0+\int_0^t e^{-t+s}A_\varepsilon(u(s))\,ds \ \mbox{for}\ t\in\left[0,\eta(u_0)\right).
\end{equation}
Thus we have
\begin{equation*}
\begin{aligned}
\Big\|e^{-t_n}\int_0^{t_n}e^s A_\varepsilon(u(s))ds\Big\|_{X_{m-1}}\ls e^{-t_n}\int_0^{t_n}e^s \|A_\varepsilon(u(s))\|_{X_{m-1}}\,ds\ls R_{m-1}
\end{aligned}
\end{equation*}
for some $R_{m-1}>0$. By a finite steps we obtain that
\begin{equation*}
\Big\|e^{-t_n}\int_0^{t_n}e^s A_\varepsilon(u(s))ds\Big\|_{X_0}\ls e^{-t_n}\int_0^{t_n}e^s \|A_\varepsilon(u(s))\|_{X_0}\,ds\ls R_0
\end{equation*}
for some $R_0>0$. This implies that
$$\|u(t_n)-e^{-t_n}u_0\|_{X_0}\ls R_0.$$
Therefore, up to a subsequence if necessary,  the sequence $\{u(t_n)-e^{-t_n}u_0\}$ is convergent in $X$, and so
$u(t_n)\to \bar{u}_0$ as $n\to \infty $ in $X$. Since $I_\varepsilon(u(t))$ is non-increasing in
$t\in\left[0,\eta(u_0)\right)$, we have
$$I_\varepsilon(\bar{u}_0)\ls I_\varepsilon(u_0).$$
The proof is complete.
\end{proof}

\begin{lemma}\label{y23}
Let $M$ be an invariant set of descending flow of \eqref{23}, and $D\subset M$ be a closed invariant set of descending flow of \eqref{23} in $M$. Assume that
$$\tilde{c}:=\inf\limits_{u\in D}
I_\varepsilon\left(u(t)\right)>-\infty.$$
Then there exists a $u_0\in D\cap K$ such that $I_\varepsilon\left(u_0\right)=\tilde{c}$.
\end{lemma}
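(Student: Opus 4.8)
The plan is to minimize $I_\va$ over the set $D\cap K$ rather than over $D$ directly, using the descending flow to push an arbitrary minimizing sequence onto critical points that remain inside $D$. Concretely, I would first pick a minimizing sequence $\{w_n\}\subset D$ with $I_\va(w_n)\ri\tilde c$. For those $w_n$ that already lie in $K$ there is nothing to do; for the remaining ones I would run the flow \eqref{23} starting at $u_0=w_n$.

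Since $D$ is an invariant set of descending flow, the orbit $\Lambda(w_n)$ stays in $D$, and because $I_\va$ is non-increasing along orbits (see \eqref{24}) we have $I_\va(u(t,w_n))\gl\inf_{u\in D}I_\va(u)=\tilde c>-\iy$ for every $t$. Thus the hypothesis of Lemma \ref{y22} is met for each orbit, and Lemma \ref{y22} yields a point $\bar w_n\in K$ with $\lim_{t\to\eta^-(w_n)}u(t,w_n)=\bar w_n$, with $I_\va(\bar w_n)\ls I_\va(w_n)$, and with a sequence $t_{n,k}\to\eta^-(w_n)$ such that $u(t_{n,k},w_n)\to\bar w_n$ in $X$. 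Here the closedness of $D$ enters: since each $u(t_{n,k},w_n)\in D$ and $D$ is closed in $X$, the limit satisfies $\bar w_n\in D$, hence $\bar w_n\in D\cap K$. Together with $\tilde c\ls I_\va(\bar w_n)\ls I_\va(w_n)\ri\tilde c$ this shows $\{\bar w_n\}\subset D\cap K$ is itself a minimizing sequence.

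Finally I would extract a convergent subsequence. Because $\bar w_n\in K$ means $I_\va'(\bar w_n)=\bar w_n-A_\va\bar w_n=0$ and $\{I_\va(\bar w_n)\}$ is bounded, $\{\bar w_n\}$ is a $(PS)$ sequence; Lemma \ref{y21} gives, up to a subsequence, $\bar w_n\to u_0$ in $Z_\va$. To conclude $u_0\in D$ I would upgrade this to convergence in $X$ using the identity $\bar w_n=A_\va\bar w_n$ together with the smoothing/compactness of $A_\va$ along the chain of spaces $X_0\hookrightarrow\cdots\hookrightarrow Z_\va$ introduced in Lemma \ref{y22}; this both shows $u_0=A_\va u_0$, i.e. $u_0\in K$, and gives $\bar w_n\to u_0$ in $X$, so that the closedness of $D$ forces $u_0\in D\cap K$. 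Continuity of $I_\va$ then yields $I_\va(u_0)=\lim_n I_\va(\bar w_n)=\tilde c$, completing the argument.

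The main obstacle I anticipate is the compatibility of topologies in the last step: Lemma \ref{y21} only delivers convergence in $Z_\va$, whereas $D$ is closed in the stronger space $X=C_0^1(\Om)$, so the bootstrap through $A_\va$ to recover $X$-convergence (rather than merely $Z_\va$-convergence) is the point that must be handled with care. A secondary technical point is the slightly ambiguous reading of $\tilde c=\inf_{u\in D}I_\va(u(t))$: since $D$ is flow-invariant and $I_\va$ decreases along orbits, this infimum coincides with $\inf_{u\in D}I_\va(u)$, which is what the argument above uses.
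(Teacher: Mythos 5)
Your proposal is correct and follows essentially the same route as the paper's own proof: both take a minimizing sequence in $D$, use Lemma \ref{y22} together with the flow-invariance and closedness of $D$ to replace it by a minimizing sequence $\{\bar w_n\}\subset D\cap K$, and then combine the $(PS)$ condition with the fixed-point identity $\bar w_n=A_\varepsilon\bar w_n$ and the compactness of $A_\varepsilon$ on $X$ to extract a limit $u_0\in D\cap K$ with $I_\varepsilon(u_0)=\tilde c$. The topological subtlety you flag (upgrading $Z_\varepsilon$-convergence to $X$-convergence via the bootstrap through $A_\varepsilon$) is exactly how the paper handles the final step.
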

\begin{proof}
Take $\{u_n\}\subset D$ such that $\lim\limits_{n\to\infty}I_\varepsilon\left(u_n\right)=\tilde{c}, n=1,2,\cdots.$ For each $n=1,2,\cdots,$ it follows from Lemma \ref{y22} that
there exists a $\bar{u}_n\in K$ and a sequence of $\{u_{n,k}\}_{k=1}^{\infty}\subset \Lambda(u_n)$ such that $u_{n,k}\to\bar{u}_n$ in $X$ as $k\to\infty$, and $I_\varepsilon(\bar{u}_n)\ls I_\varepsilon(u_n)$. Since $\Lambda(u_n)\subset D$ and $D$ is closed in $M$, we have $\bar{u}_n\in D$ for each $n=1,2,\cdots.$
By using the (PS) condition we can see that $\{\bar{u}_n\}$ is bounded in $Z_\va$. By a bootstrap procedure, we see that $\{\bar{u}_n\}$ is bounded in $X$. Since $\bar{u}_n=A_\varepsilon(\bar{u}_n)$ and $A_\varepsilon:X\to X$ is compact, we may assume, up to a subsequence, that $\bar{u}_n\to u_0$ in $X$. Obviously,
$u_0\in K\cap D$ and $I_\varepsilon(u_0)=\tilde{c}$. The proof is complete.
\end{proof}

\begin{lemma}\label{y24}
([13]) Assume $U$ is bounded connected open set of $\mathbb{R}^2$ and $(0,0)\in U$,
then there exists a connected component $\Gamma^{\prime}$ of the boundary of $U$ such that each one side ray $l$ though the origin satisfies $l\cap \Gamma^{\prime}\neq \emptyset$.
\end{lemma}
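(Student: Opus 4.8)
The plan is to identify the required component explicitly as the frontier of the unbounded complementary region of $U$, and to split the argument into an elementary ``every ray meets it'' part and a genuinely topological ``it is connected'' part. I work in the one--point compactification $\mathbb{S}^2=\mathbb{R}^2\cup\{\infty\}$. Since $U$ is bounded and open, $\mathbb{R}^2\setminus U$ is closed and has exactly one unbounded connected component, which I call $C_\infty$; set $\widehat C_\infty=C_\infty\cup\{\infty\}$, a closed connected subset of $\mathbb{S}^2$, and let $\widetilde U=\mathbb{S}^2\setminus\widehat C_\infty$ be the ``filled'' region, i.e.\ $U$ together with all its bounded holes. My candidate will be $\partial C_\infty$, or, more precisely, the connected component of $\partial U$ that contains it.

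First I would dispose of the ray condition, which is elementary and uses only that $U$ is bounded with $0\in U$. Fix a one--sided ray $l$ from the origin. Because $0\in U$ we have $0\notin C_\infty$, while every point of $l$ far enough from the origin lies outside the bounded set $U$ and is joined to $\infty$, hence lies in $C_\infty$. Let $P$ be the point of $l$ at the smallest parameter value for which $l$ enters $C_\infty$; by closedness of $C_\infty$ we get $P\in C_\infty$, while all nearer points of $l$ lie outside $C_\infty$, so $P$ is a limit of points of $\mathbb{R}^2\setminus C_\infty$. Thus $P\in\partial C_\infty$, giving $l\cap\partial C_\infty\neq\emptyset$.

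The substantive step is to prove that $\partial C_\infty$ is connected and contained in $\partial U$; this is where plane topology enters and is the real obstacle. The inclusion $\partial C_\infty\subset\partial U$ follows from a local argument: a frontier point $p$ of $C_\infty$ admits arbitrarily close points of $U$, for otherwise a whole neighbourhood of $p$ would lie in the closed set $\mathbb{R}^2\setminus U$ and, being connected, inside the single component $C_\infty$, contradicting $p\in\partial C_\infty$. For connectedness I would invoke the unicoherence of $\mathbb{S}^2$ (equivalently the Phragm\'en--Brouwer property). Concretely, $\widetilde U$ is connected, being the complement of a component of the complement of the connected set $U$; alternatively, each bounded hole attaches to $U$ along its frontier, which lies in $\overline U$. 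Then $\mathbb{S}^2=\overline{\widetilde U}\cup\widehat C_\infty$ exhibits $\mathbb{S}^2$ as a union of two closed connected sets, so by unicoherence their intersection is connected. Since $\widetilde U$ is open and $\infty$ is interior to $\widehat C_\infty$, that intersection is exactly $\partial\widetilde U=\partial C_\infty\subset\mathbb{R}^2$, which is therefore connected.

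Finally I would assemble the pieces: $\partial C_\infty$ is a connected subset of $\partial U$, hence lies in a unique connected component $\Gamma'$ of $\partial U$, and by the ray step every one--sided ray through the origin meets $\partial C_\infty\subset\Gamma'$. I expect the only genuine difficulty to be the connectedness of the frontier of the unbounded complementary region; the inclusion $\partial C_\infty\subset\partial U$ and the ray property are routine point-set arguments, whereas the connectedness rests essentially on the unicoherence of the sphere, which is the natural tool to quote here and is consistent with the cited source.
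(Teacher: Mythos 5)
Your proof is correct. Note first that the paper itself offers no proof of this lemma at all --- it is stated with a bare citation to [13] --- so there is nothing internal to compare against; what you have produced is a self-contained argument that the paper leaves to the literature. Your decomposition is sound: the ray step (the first entry point of $l$ into the unbounded complementary component $C_\infty$ lies in $\partial C_\infty$) and the inclusion $\partial C_\infty\subset\partial U$ are indeed routine point-set arguments and you carry them out correctly, including the observation that a neighbourhood of a frontier point of $C_\infty$ avoiding $U$ would be swallowed by the component $C_\infty$ itself. The one genuinely topological step --- connectedness of $\partial C_\infty$ --- is correctly reduced to the unicoherence of $\mathbb{S}^2$ applied to the closed connected sets $\overline{\widetilde U}$ and $\widehat C_\infty$, and your verification that $\widetilde U$ is connected (each bounded hole $D$ has $\partial D\subset\overline U$, so $U\cup D$ admits no separation) is the right justification; the identification of $\overline{\widetilde U}\cap\widehat C_\infty$ with $\partial C_\infty\subset\mathbb{R}^2$ uses only that $\widetilde U$ is bounded and open, which you note. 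Since $\partial C_\infty$ is then a nonempty connected subset of $\partial U$, it sits inside a single component $\Gamma'$, which meets every ray. The argument is complete; if anything, one could remark that unicoherence of $\mathbb{S}^2$ (equivalently the Phragm\'en--Brouwer property) is itself a nontrivial theorem, so the proof is not elementary, but invoking it is entirely standard and appropriate for a planar statement of this kind.
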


Similar to [8], we the following Definition 2.2 and Lemma 2.5.
\begin{definition}
\rm Let $M$ and $D$ be invariant sets of \eqref{23} with $D\subset M$. Denote $$C_M(D)=\left\{u_0:u_0\in D~\text{or}~u_0\in M\backslash D
\text{and there is}~t^{\prime}\in \left[0,\eta(u_0)\right),\text{such that} ~u(t^{\prime},u_0)\in D\right\}.$$
If $D=C_M(D)$, then $D$ is called a complete descending flow invariant set of \eqref{23}.
\end{definition}

\begin{lemma}\label{y25}
Let $M$  be an invariant set of \eqref{23}, $D$  an open subset of $M$. Then\\
(1)~$C_M(D)$ is an open subset of $M$;\\
(2) if $C_M(D)\neq M, \partial_M C_M(D)$ is a complete descending flow invariant set of \eqref{23};\\
(3) if $C_M(D)\neq M$ and $\inf\limits_{u\in \partial_M D}\Phi(u)>-\infty$, then $$\inf\limits_{u\in \partial_MC_M(D)}\Phi(u)\gl\inf\limits_{u\in \partial_M D}\Phi(u).$$
\end{lemma}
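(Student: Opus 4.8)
The plan is to prove the three assertions in turn, relying on two structural facts about the flow of \eqref{23}. First, because $A_\varepsilon$ is Lipschitz on $X$, the solution $u(t,u_0)$ depends continuously on $u_0$, the maximal existence time $\eta(\cdot)$ is lower semicontinuous, and each time-$t$ map is a local homeomorphism; second, by \eqref{24} the energy $\Phi=I_\varepsilon$ is non-increasing along every orbit. Write $G=C_M(D)$. I will use that $D$ is invariant (as in Definition 2.2), so that once an orbit enters $D$ it remains there; consequently, for $u_0\in G$ the orbit meets $D$ at every later time as well, which makes $G$ forward invariant in $M$. Equivalently $C_M(G)=G$, so $M\setminus G$ is forward invariant, a fact that also follows directly since ``the forward orbit never meets $D$'' is inherited by every tail of an orbit.

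For (1), take $u_0\in G$. If $u_0\in D$, then openness of $D$ in $M$ gives a relative neighborhood of $u_0$ contained in $D\subset G$. If instead $u_0\in M\setminus D$ and $u(t',u_0)\in D$ for some $t'$, then by lower semicontinuity of $\eta$ and continuous dependence the map $w\mapsto u(t',w)$ is defined and continuous near $u_0$, so $u(t',w)\in D$ for all $w$ in a relative neighborhood of $u_0$; hence that neighborhood lies in $G$. Thus $G$ is open in $M$.

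For (2), set $H=\partial_M G=\overline{G}^{M}\setminus G$ (the last equality because $G$ is open in $M$). The invariance of $H$ is clean: if $u_0\in H\setminus K$, then $u_0\in M\setminus G$ forces the whole orbit to stay in $M\setminus G$ by forward invariance of that set, while $u_0\in\overline{G}$ forces the orbit to stay in $\overline{G}$ by forward invariance of $\overline{G}$ (which follows from that of $G$ together with continuous dependence, approximating $u_0$ by points of $G$); hence the orbit remains in $\overline{G}\setminus G=H$. It then remains to prove completeness, $C_M(H)=H$. The inclusion $H\subset C_M(H)$ is trivial. For the reverse, let $u_0\in C_M(H)$, so $u(t_0,u_0)=:w\in H$ for some $t_0$; since $u_0\in G$ would force $w\in G$ by forward invariance, we get $u_0\notin G$, and the only point left is to show $u_0\in\overline{G}$, equivalently that $M\setminus\overline{G}$ is forward invariant. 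I expect this to be the main obstacle: forward invariance of the open set $M\setminus\overline{G}$ does not follow formally from that of $M\setminus G$, because the time-$t$ maps need not be open as self-maps of $M$ (backward trajectories may leave $M$). The plan is to examine the first time $t_1$ at which the orbit of a putative exterior point meets $\overline{G}$, observe that $u(t_1,u_0)\in\partial_M G=H$ by openness of $G$, and force a contradiction using continuous dependence and the structural properties of $M$ inherited from [8]; this is the step that must be carried out carefully.

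For (3), let $\alpha=\inf_{u\in\partial_M D}\Phi(u)>-\infty$ and fix $u_0\in\partial_M C_M(D)\subset\overline{G}\setminus G$. Choose $v_n\in G$ with $v_n\to u_0$. If infinitely many $v_n$ lie in $D$, then $u_0\in\overline{D}$, and since $u_0\notin G\supset D$ we get $u_0\in\partial_M D$, whence $\Phi(u_0)\ge\alpha$. Otherwise $v_n\notin D$ for large $n$; let $\tau_n=\inf\{t:u(t,v_n)\in D\}$ be the first entry time into the open set $D$, and put $w_n=u(\tau_n,v_n)$. By openness of $D$ and continuity of the orbit, $w_n\in\overline{D}\setminus D\subset\partial_M D$, so $\Phi(w_n)\ge\alpha$; by the monotonicity \eqref{24}, $\Phi(w_n)\le\Phi(v_n)$; and by continuity of $\Phi$, $\Phi(v_n)\to\Phi(u_0)$. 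Combining these, $\alpha\le\Phi(w_n)\le\Phi(v_n)\to\Phi(u_0)$, so $\Phi(u_0)\ge\alpha$. As $u_0$ was arbitrary, this gives $\inf_{\partial_M C_M(D)}\Phi\ge\inf_{\partial_M D}\Phi$, which is the desired inequality. Note that this third part needs only openness of $D$, continuity of $\Phi$, and monotonicity along orbits, so it is independent of the delicate completeness argument in (2).
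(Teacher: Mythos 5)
The paper itself offers no proof of Lemma~\ref{y25}: it is stated after the remark ``Similar to [8]'' and is imported from Liu--Sun's theory of invariant sets of descending flow, so there is no in-paper argument to compare yours against. Judged on its own terms, your parts (1) and (3) are correct and are essentially the standard arguments of that theory: openness of $C_M(D)$ follows from continuous dependence on initial data plus openness of $D$, and for (3) you flow a point $v_n\in C_M(D)$ near $u_0\in\partial_M C_M(D)$ up to its first entrance time into $D$, note that the entrance point lies on $\partial_M D$ because $D$ is open in $M$, and combine the monotonicity \eqref{24} of $I_\varepsilon$ along orbits with continuity of $I_\varepsilon$. Two points you use implicitly should be said: such a $v_n$ cannot lie in $K$ (a rest point of $C_M(D)\setminus D$ would never reach $D$, contradicting $v_n\in C_M(D)$), and the degenerate case where infinitely many $v_n$ already lie in $D$ gives $u_0\in\partial_M D$ directly.

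The genuine gap is in (2), and you flag it yourself. You do prove that $H=\partial_M C_M(D)$ is invariant, but the completeness claim $C_M(H)=H$ is reduced to the forward invariance of $M\setminus\overline{C_M(D)}^{\,M}$ and then left as ``the step that must be carried out carefully''; that step is precisely the nontrivial content of (2), so as written this part is a plan rather than a proof. The difficulty is real: forward invariance of $M\setminus\overline{G}^{\,M}$ (with $G=C_M(D)$) does not follow from that of $M\setminus G$, because the time-$t$ maps, though local homeomorphisms of $X$, need not be open as self-maps of $M$. The natural way to close it is the one you sketch: if $u_0\in M\setminus\overline{G}^{\,M}$ and $t_1>0$ is the first time with $w=u(t_1,u_0)\in\overline{G}^{\,M}$ (necessarily $w\in H$), take $v_n\in G$ with $v_n\to w$ and, for $0<\delta<t_1$, solve \eqref{23} backward for time $t_1-\delta$; the backward solutions converge to $u(\delta,u_0)\notin\overline{G}^{\,M}$, and whenever they lie in $M$ they lie in $G$ (their forward orbits reach $D$ through $v_n$), a contradiction. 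What is missing is the reason these backward solutions remain in $M$; this requires structural information about the particular sets $M$ used in Lemma~\ref{y26} (the whole space $X$, the cone $P$, the component $\Sigma$) that is not contained in the bare hypotheses of the lemma, and it is exactly the input one must extract from [8]. Until that is supplied, assertion (2) is unproved in your write-up.
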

\begin{lemma}\label{y26}
 Assume that condition ${\rm{(}}{{\rm{H}}_{\rm{1}}}{\rm{)}}\sim{\rm{(}}{{\rm{H}}
_{\rm{5}}}{\rm{)}}$ hold, then the problem $(2.1)$ has at least one positive solution, one negative solution and one sign-changing solution.
\end{lemma}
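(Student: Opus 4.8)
The plan is to obtain the three solutions as critical points of $I_\va$ (equivalently, fixed points of $A_\va$) living in three mutually disjoint regions cut out by the descending flow \eqref{23}, taking $M=X$ throughout.

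\textbf{Step 1 (invariant cones).} Fix a small $\mu>0$ and set $D^+=\{u\in X:\operatorname{dist}_X(u,P)<\mu\}$ and $D^-=-D^+$. Since $K_\va=(-\De+\tfrac1\va)^{-1}$ is strongly positive by the maximum principle, and using (H2)--(H3) (so that $p(x,\xi)\xi\gl0$ and $p(x,\xi)=o(|\xi|)$ near $0$) together with an elementary bound on the penalty term $\tfrac1\va(u-\psi)^+$, I would prove a cone-contraction estimate: there is $\mu_0>0$ so that for $\mu\in(0,\mu_0]$,
$$\operatorname{dist}_X(A_\va u,P)\ls\tfrac12\operatorname{dist}_X(u,P),\qquad\fo u\in\ov{D^+}.$$
Hence $A_\va(\ov{D^+})\sub D^+$, so $D^+$ and (symmetrically) $D^-$ are open invariant sets of \eqref{23}. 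The same contraction forces any $u\in K\cap\ov{D^+}$ to satisfy $\operatorname{dist}_X(u,P)\ls\tfrac12\operatorname{dist}_X(u,P)$, i.e. $u\in P$; likewise $K\cap\ov{D^-}\sub-P$, and therefore $K\cap D^+\cap D^-\sub P\cap(-P)=\{0\}$.

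\textbf{Step 2 (positive and negative solutions).} By Remark \ref{R13} and (H2), $0$ is a strict local minimum: there are $\rho,\al>0$ with $I_\va(u)\gl\al$ whenever $\|u\|=\rho$. Since $\vr_1\in P\sub\ov{D^+}$ and, by (H4) and \eqref{tao} (with $s+1>2$ dominating the penalty term), $I_\va(t\vr_1)\ri-\iy$ as $t\ri+\iy$, there is $e^+=t^*\vr_1\in P$ with $I_\va(e^+)<0$, and the segment $t\mapsto t\vr_1$ stays in $P\sub\ov{D^+}$. Working in the invariant set $\ov{D^+}$ (so the flow \eqref{23} keeps all deformations inside it), the $(PS)$ condition (Lemma \ref{y21}) and the orbit estimates of Lemma \ref{y22} make the mountain-pass value
$$c^+=\inf_{\ga\in\Gamma^+}\max_{t\in[0,1]}I_\va(\ga(t))\gl\al>0,\qquad\Gamma^+=\{\ga\in C([0,1],\ov{D^+}):\ga(0)=0,\ \ga(1)=e^+\},$$
a critical value attained at some $u^+\in\ov{D^+}$ with $I_\va(u^+)=c^+>0=I_\va(0)$; thus $u^+\neq0$ and, by Step 1, $u^+\in P\bk\{0\}$ is a positive solution of $(E)_\va$. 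The negative solution $u^-\in(-P)\bk\{0\}$ is produced symmetrically on $\ov{D^-}$.

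\textbf{Step 3 (sign-changing solution).} Let $W=D^+\cup D^-$ and $O=C_X(D^+)\cup C_X(D^-)$, open by Lemma \ref{y25}(1). As a critical point is its own orbit, $K\cap C_X(D^\pm)=K\cap D^\pm$, so by Step 1 every critical point in $O$ has constant sign; hence any critical point in $N:=X\bk O$ is sign-changing, and it suffices to produce one. Here the plane $Y=\mathrm{span}\{\vr_1,\vr_2\}$ enters: by (H5) the disk $\mathcal D=\ov B(0,r_1)\cap Y$ satisfies $u\ls\psi$, so the penalty term vanishes and $I_\va(u)\ls Q(u)$ on $\mathcal D$, giving $I_\va<0$ on $\pa\mathcal D$. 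Assume for contradiction $K\cap N=\ep$; a deformation argument built on Lemmas \ref{y22} and \ref{y25} then drives $\mathrm{int}\,\mathcal D$ into $O$, so $\mathrm{int}\,\mathcal D=U^+\cup U^-$ with $U^\pm=\mathrm{int}\,\mathcal D\cap C_X(D^\pm)$ open and $0\in U^+\cap U^-$. Identifying $Y\cong\R^2$ by $(a,b)\ma a\vr_1+b\vr_2$ and applying the ray lemma \ref{y24} to the component of $U^+$ through the origin, I get a boundary component $\Gamma'$ met by every ray through $0$; since $\Gamma'\sub U^-$, tracking $I_\va$ along the flow issuing from $\Gamma'$ and invoking Lemma \ref{y25}(3) contradicts $I_\va<0$ on $\pa\mathcal D$. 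This yields $u_0\in K\cap N$, a sign-changing solution.

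\textbf{Main obstacle.} The two delicate points are the cone-contraction estimate of Step 1 in the presence of the non-sign-preserving penalty $\tfrac1\va(u-\psi)^+$ (which must be absorbed into the factor $\tfrac12$), and, above all, the finiteness and criticality of the minimax level in Step 3. Because $I_\va$ is unbounded below on $\pa W$ along positive directions, one cannot merely minimize over $\pa C_X(W)$ via Lemma \ref{y23}; it is precisely the boundedness of the disk $\mathcal D$ together with the ray property of Lemma \ref{y24} that confines the construction to a region where $I_\va$ stays controlled, allowing a genuine sign-changing critical point to be extracted.
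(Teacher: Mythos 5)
Your Steps 1--2 run parallel to the paper's but are harder than necessary. The paper needs no cone-contraction estimate on a $\mu$-neighborhood of $P$: here the \emph{exact} invariance $A_\va(P)\sub P$ holds, since for $u\in P$ one has $F_\va u=p(x,u)+\tfrac1\va\bigl(u-(u-\psi)^+\bigr)=p(x,u)+\tfrac1\va\min(u,\psi)\gl 0$ (using $\psi\gl 0$, which (H$_5$) forces by taking $u=0$) and $K_\va$ is positivity-preserving; the paper then works directly with $\mathring P$, $-\mathring P$ as invariant sets and obtains the one-sign solutions by \emph{minimizing} $I_\va$ over $\pa_P C_P(\mathring P\cap D_\va)$ via Lemmas~\ref{y23} and~\ref{y25}, rather than by a mountain pass in $\ov{D^+}$. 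Your contraction with uniform factor $\tfrac12$ is asserted, not proved, and is delicate because the penalty contributes a term of size $\tfrac1\va\,\mathrm{dist}(u,P)$ that $K_\va$ must absorb; it is avoidable.

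The genuine gap is in Step 3, the heart of the lemma. From $K\cap N=\ep$ you conclude that the flow ``drives $\mathrm{int}\,\mathcal D$ into $O$,'' but Lemma~\ref{y22} yields a limiting critical point only along orbits on which $I_\va$ is bounded below, and orbits in $N$ may have $I_\va\ri-\iy$ (indeed $I_\va$ is unbounded below outside $D^+\cup D^-$), so absence of critical points in $N$ does not force $\mathrm{int}\,\mathcal D\sub O$. You name this obstacle yourself but do not resolve it; the closing sentence (``tracking $I_\va$ along the flow issuing from $\Gamma'$ \dots contradicts $I_\va<0$ on $\pa\mathcal D$'') is not an argument --- note also that $\Gamma'\sub U^-$ can fail since the boundary of the component may meet $\pa\mathcal D$, and $C_X(D^+)$, $C_X(D^-)$ are not disjoint (both contain $0$), so connectedness of $\Gamma'$ alone produces no contradiction. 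The paper's missing device is the sublevel-set component: let $D_\va$ be the connected component of $\{u\in X: I_\va(u)<\de_1\}$ containing $0$, with $0<\de_1<\al_0$; it is invariant, $C_X(D_\va)\cap Y$ is a bounded open neighborhood of the origin in $Y$ (bounded precisely because $I_\va<0$ on $\pa B(0,r_1)\cap Y$), and by Lemma~\ref{y25}(3) $I_\va$ is bounded below on $\pa_X C_X(D_\va)$. Applying the ray Lemma~\ref{y24} there gives a boundary component $\Sigma'$ met by every ray, hence meeting both the $\pm\vr_1$ directions; taking $\Sigma$ the component of $\pa_X C_X(D_\va)$ containing $\Sigma'$ and $\Lambda=\Sigma\bk\bigl(C_\Sigma(\mathring P\cap\Sigma)\cup C_\Sigma(-\mathring P\cap\Sigma)\bigr)$, one gets a nonempty closed invariant set on which $I_\va$ is bounded below and which avoids $\pm P$, so Lemma~\ref{y23} supplies the sign-changing critical point. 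Without $D_\va$ (or an equivalent confinement to an invariant region where $I_\va$ is bounded below), your Step 3 does not close.
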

\begin{proof}
At first, we show that $\mathring{P}$ and $-\mathring{P}$ are invariant sets of descending flow of \eqref{23}. We only show that $\mathring{P}$ is an invariant set of descending flow, the proof of $-\mathring{P}$ can be the same. Since for any $u\in P$, one has
$$F_\varepsilon u=\frac{1}{\varepsilon}u-\frac{1}{\varepsilon}(u-\psi)^+ +p(x,u)\in
P,$$
also $K_\varepsilon (P)\subset P$, and thus $A_\varepsilon (P)\subset P$. Therefore, we know that $P$ is an invariant set of descending flow of (2.2). For any $u_0\in\mathring{P}$, the unique solution $u(t,u_0)$ of \eqref{23} satisfying
$$u(t,u_0)=e^{-t}u_0+\left(1-e^{-t}\right)\lim\limits_{n\to \infty}\frac{1}{n}\sum^{n}_{k=1}A_{\varepsilon}\left(u\left(\ln
\left(1+\frac{k}{n}(e^t-1)
\right)
\right),u_0\right).$$
Since $P$ is a closed convex set,
$$\lim\limits_{n\to \infty}\frac{1}{n}\sum^{n}_{k=1}A_{\varepsilon}\left(u\left(\ln
\left(1+\frac{k}{n}
(e^t-1)\right)
\right),u_0\right)\in P.$$
Note that $u_0\in\mathring{P}$. So $u(t,u_0)\in \mathring{P}$ for all $t\in \left[0,\eta(u_0)\right)$. Hence, $\mathring{P}$ is an invariant set of descending flow of (2.2). By \eqref{12}, we have
\begin{equation*}
\begin{aligned}
I_\varepsilon(u)&=\frac{1}{2}\|u\|^2+\frac{1}{2\varepsilon}\int_\Omega
\left(\left(u-\psi(x)\right)^+\right)^2\,dx-\int_\Omega P(x,u)\,dx\\
&\gl \frac{1}{2}\|u\|^2-\int_\Omega P(x,u)\,dx\\
&\gl\frac{1}{2}\|u\|^2-\delta|u|_2^2-C_\delta|u|_{2^*}^{2^*}\\
&\gl\left(\frac{1}{2}-\mathcal{S}_2 \delta\right)\|u\|^2-C_\delta\mathcal{S}_{2^*}\|u\|^{2^*},
\end{aligned}
\end{equation*}
where $\mathcal{S}_2,\mathcal{S}_{2^*}$  are Sobolev embedding constants, so there is a small enough $\rho_0>0$, such that   $\rho_0<r_1$, and
$$\alpha_0= \left(\frac{1}{2}-\mathcal{S}_2 \delta\right)\rho_0^2-C_\delta\mathcal{S}_{2^*}\rho_0^{2^*}>0.$$
Therefore, $I_\varepsilon(u)\gl\alpha_0$ for all $u\in Z_\va$ with $\|u\|=\rho_0$. According to \eqref{tao} and ${\rm{(}}{{\rm{H}}_{\rm{5}}}{\rm{)}}$, for any $u\in \partial B(0,r_1)\cap Y$, we have
\begin{equation}\label{113}
\begin{aligned}
I_\varepsilon(u)&=\frac{1}{2}\|u\|^2+\frac{1}{2\varepsilon}\int_\Omega
\left(\left(u-\psi(x)\right)^+\right)^2\,dx-\int_\Omega P(x,u)\,dx\\
&=\frac{1}{2}\|u\|^2-\int_\Omega P(x,u)\,dx\\
&\ls\frac{1}{2}\|u\|^2-a_3\int_\Omega|u|^s\,dx+a_4|\Omega|\\
&<0.
\end{aligned}
\end{equation}

Let $i_\varepsilon$ from $H_0^1(\Om)$ onto $Z_\varepsilon$ be the identity mapping.  Obviously, $i_\varepsilon$ is a homeomorphic mapping between $H_0^1(\Om)$ and $Z_\varepsilon$. Let $O=i_\va\left(B(0,\rho_0)\right)$, where $B(0,\rho_0)=\{u\in H_0^1(\Om):\|u\|<\rho_0\}$. Then $O$ is an open set in $Z_\varepsilon$, and so an open set in $X$ since $X\hookrightarrow Z_\va$. Moreover, $I_\varepsilon(u)\gl\alpha_0$ for all $u\in \partial O$, where $\partial O$ denotes the boundary of $O$ in $Z_\varepsilon$. Let
$$G=\{u\in Y:\|u\|<r_1\},$$
obviously, $G\supset B(\theta,\rho_0)\cap Y$. Let $G_1=i_\varepsilon(G)$, then $G_1$ is a bounded open set in $Z_\varepsilon\cap Y$, and $G_1$ is an open set of $X\cap Y$  containing the origin. Take $\delta_1\in (0, \alpha_0)$ small enough, and let $D_\varepsilon$ be the connected component of $I_\varepsilon^{\delta_1}:=\{u\in X:I_\varepsilon(u)<\delta_1\}$ containing the origin point of $X$. Then $D_\varepsilon$ is an invariant set of descending flow of (2.2), and so $\mathring{P}\cap D_\varepsilon,-\mathring{P}\cap D_\varepsilon$ are two nonempty invariant sets of descending flow of (2.2). It follows from \eqref{113} that $C_P(\mathring{P}\cap D_\varepsilon)\neq P$. Thus, $\partial_P(\mathring{P}\cap D_\varepsilon)\neq \emptyset$ is a closed invariant set of descending flow of (2.2). By Lemma \ref{y25} we have
$$c_+=\inf\limits_{u\in\partial_PC_P\left(\mathring{P}\cap D_\varepsilon\right)}I_\varepsilon(u)\gl\inf\limits_{u\in\partial_P\left(
\mathring{P}\cap D_\varepsilon\right)}I_\varepsilon(u).$$
It follows from Lemma \ref{y23} that there exists $u_+^\varepsilon\in\partial_PC_P(\mathring{P}\cap D_\varepsilon)\cap K$ such that $I_\varepsilon(u_+^\va)=c_+$. Then $u_+^\varepsilon$ is a positive solution of $(E)_\varepsilon$.

Similarly, $(E)_\va$ has at least one negative solution $u_-^\varepsilon\in\partial_{-P}C_{-P}(-\mathring{P}\cap D_\varepsilon)$.

Next we shall prove the existence of the sign-changing solution of $(E)_\va$. It follows from \eqref{113} that $C_X(D_\varepsilon)\cap Y$ is a bounded open subset of $Y$ containing the origin of $Y$. According to Lemma \ref{y24}, there exists a connected component $\Sigma^{\prime}$ of $\partial C_X(D_\varepsilon)\cap Y$ such that each one sided ray $l$ through the origin of $Y$ satisfies $l\cap\Sigma^{\prime}\neq \emptyset$. Let $\Sigma$ be the connected component of $\partial_{X} C_X(D_\varepsilon)$ containing $\Sigma^{\prime}$. Obviously, $C_{\Sigma}(\mathring{P}\cap \Sigma),C_{\Sigma}(-\mathring{P}\cap \Sigma)$) are two nonempty open subsets of $\Sigma$. By the connectedness of $\Sigma$ we see that
$$\Lambda:=\Sigma\backslash\left(C_{\Sigma}(\mathring{P}\cap \Sigma)\cup C_{\Sigma}(-\mathring{P}\cap \Sigma)\right)\neq \emptyset.$$
Since $\Sigma$ is an invariant set of descending flow of \eqref{23}, $C_{\Sigma}(\mathring{P}\cap \Sigma)$ and $C_{\Sigma}(-\mathring{P}\cap \Sigma)$ are two complete invariant set of descending flow of \eqref{23} in $\Sigma$, $\Lambda$ is a closed invariant set of descending flow of \eqref{23} in $\Sigma$. Moreover,
$$\tilde{c}:=\inf\limits_{u\in \Lambda}
I_\varepsilon(u)\gl\inf\limits_{u\in \partial B(0,\rho_0)}
I_\varepsilon(u)>-\infty.$$
It follows from Lemma \ref{y23} that $\tilde{c}$ is a critical value of $I_\varepsilon$, and $I_\varepsilon(\omega_{\varepsilon})=\tilde{c}$ for some $\omega_{\varepsilon}\in \Lambda\cap K$. Obviously, $\omega_{\varepsilon}\notin P\cup(-P)$ and $\omega_{\varepsilon}\notin B(0,\rho_0)$. Thus, $\omega_{\varepsilon}$ is a sign-changing solution of $(E)_\va$. The proof is complete.
\end{proof}

\section{Sign-changing solutions of the variational inequality}
\setcounter{equation}{0}

In this section, in order to obtain the sign-changing solution of (VI), we take a sequence of positive numbers $\{\va_n\}$ such that $\va_n\ri 0$ as $n\ri\infty$. By the method of section 2 we can obtain  a sign-changing solution sequence $\{\omega_{\varepsilon_n}\}$ of the penalty problem $(E)_{\va_n}$. Then, in this section we can show that, up to a subsequence if necessary, $\om_{\va_n}\ri \om$ as $n\ri \infty$ for some $\om\in H_0^1(\Om)$, and $\om$ is a sign-changing solution of (VI). For convenience, in this section, for each $\va_n>0$, the  penalty equation $(E)_{\va_n}$ is denoted as $(E)_n$, the corresponding functional is denoted as $I_n$, and the sign-changing solution $\omega_{\varepsilon_n}$ is denoted as $\omega_n$.

\begin{lemma}\label{y131}
 There exists $\sigma>0$, such that for any $\varepsilon_n>0$, one has $0\ls I_n(\omega_n)\ls \sigma$.
\end{lemma}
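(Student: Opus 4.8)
The plan is to read off $I_n(\omega_n)$ from the variational characterization produced in Lemma~\ref{y26}, where $\omega_n=\omega_{\va_n}\in\Lambda\cap K$ is the sign-changing critical point and $I_n(\omega_n)=\tilde c:=\inf_{u\in\Lambda}I_n(u)$. The lower bound is essentially already in that proof: there it is shown that $I_n\gl\alpha_0$ on the sphere $\{\|u\|=\rho_0\}$, with $\alpha_0=(\tfrac12-\mathcal{S}_2\delta)\rho_0^2-C_\delta\mathcal{S}_{2^*}\rho_0^{2^*}>0$, and that $\tilde c\gl\inf_{\|u\|=\rho_0}I_n\gl\alpha_0$. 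Since $\delta,\rho_0,\alpha_0$ are fixed independently of $\va$, we get $I_n(\omega_n)=\tilde c\gl\alpha_0>0\gl0$ for every $n$, which gives the left inequality.

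For the right inequality I would prove the min-max estimate $\tilde c\ls\sup_{u\in\overline{B(0,r_1)}\cap Y}I_n(u)$ and then invoke ${\rm(H_5)}$. The decisive point is that on the finite-dimensional disk $\overline{B(0,r_1)}\cap Y$ one has $u\ls\psi$ by ${\rm(H_5)}$, hence $(u-\psi)^+\equiv0$ and the penalty term $\tfrac1{2\va_n}\int_\Omega((u-\psi)^+)^2\,dx$ vanishes identically there. Consequently, for $u\in\overline{B(0,r_1)}\cap Y$,
$$I_n(u)=\tfrac12\|u\|^2-\int_\Omega P(x,u)\,dx,$$
which does not depend on $n$ at all. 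As $\overline{B(0,r_1)}\cap Y$ is compact in the finite-dimensional space $Y$ and $u\mapsto\tfrac12\|u\|^2-\int_\Omega P(x,u)\,dx$ is continuous by ${\rm(H_1)}$, the constant $\sigma:=\sup_{u\in\overline{B(0,r_1)}\cap Y}\big(\tfrac12\|u\|^2-\int_\Omega P(x,u)\,dx\big)$ is finite and independent of $n$. Granting the min-max estimate, this yields $I_n(\omega_n)=\tilde c\ls\sigma$.

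To justify $\tilde c\ls\sup_{\overline{B(0,r_1)}\cap Y}I_n$ I would exhibit one point of $\Lambda$ inside the disk. Recall from the construction of Lemma~\ref{y26} (applied with $\va=\va_n$) that $\Sigma'$ is a connected component of $\partial C_X(D_{\va_n})\cap Y$ meeting every one-sided ray through the origin of $Y$, that $\Sigma'\subset\Sigma$, and that $\Lambda=\Sigma\setminus(C_\Sigma(\mathring P\cap\Sigma)\cup C_\Sigma(-\mathring P\cap\Sigma))$. The relative open sets $C_\Sigma(\pm\mathring P\cap\Sigma)\cap\Sigma'$ are disjoint and both nonempty (the points of $\Sigma'$ on the rays $\mathbb{R}^{+}\varphi_1$ and $\mathbb{R}^{-}\varphi_1$ flow into $\mathring P$, resp. $-\mathring P$), so by connectedness of $\Sigma'$ they cannot cover it; hence there is $w\in\Sigma'\cap\Lambda$, and $\tilde c=\inf_\Lambda I_n\ls I_n(w)$. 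If in addition $\|w\|\ls r_1$, then $I_n(w)\ls\sigma$ by the previous paragraph, giving the claim.

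The crux is therefore the inclusion $\Sigma'\subset\overline{B(0,r_1)}\cap Y$, and the key observation that makes it work is an energy–sign separation. First, $D_{\va_n}$ (the component of $\{I_n<\delta_1\}$ through the origin) satisfies $D_{\va_n}\subset B(0,\rho_0)$: on $\{\|u\|=\rho_0\}$ one has $I_n\gl\alpha_0>\delta_1$ (recall $\delta_1\in(0,\alpha_0)$), so this sphere is disjoint from $\{I_n<\delta_1\}$ and the connected set $D_{\va_n}$ cannot cross it. Since the quadratic-type lower bound of Lemma~\ref{y26} gives $I_n\gl0$ on $\overline{B(0,\rho_0)}$, we obtain $D_{\va_n}\subset\{I_n\gl0\}$. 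Now any descending orbit starting from a point with $I_n<0$ keeps $I_n<0$ (by monotonicity along the flow) and hence never meets $D_{\va_n}$, so $\{I_n<0\}\cap C_X(D_{\va_n})=\emptyset$. By \eqref{113} the sphere $\partial B(0,r_1)\cap Y$ lies in $\{I_n<0\}$, so it lies outside $C_X(D_{\va_n})$, while $\Sigma'\subset\partial_X C_X(D_{\va_n})\subset\{I_n\gl\delta_1\}$ (Lemma~\ref{y25}(3)) is disjoint from $\{I_n\ls0\}$ altogether; since $\Sigma'$ encircles the origin in $Y$, it must lie in the open disk $\{\|u\|<r_1\}$, forcing $\|w\|<r_1$. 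The only delicate bookkeeping is the planar ``encircling implies inside'' separation in $Y\cong\mathbb{R}^2$ and the verification that everything is uniform in $n$; the latter is automatic here, since $\sigma$ is computed from the $\va$-free functional on the fixed disk $\overline{B(0,r_1)}\cap Y$.
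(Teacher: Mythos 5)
Your proposal is correct and follows essentially the same route as the paper: the lower bound comes from the fact that $\tilde c=\inf_{\Lambda}I_n\gl \inf_{\partial B(0,\rho_0)}I_n>0$, and the upper bound comes from locating a point of $\Lambda$ inside the disk $\overline{B(0,r_1)}\cap Y$, where ${\rm(H_5)}$ kills the penalty term so that $I_n$ restricted there is the $\va$-independent functional $\tfrac12\|u\|^2-\int_\Omega P(x,u)$, bounded on the compact finite-dimensional disk. The paper states the existence of such a point in one sentence (``there must be a ray passing through the origin and passing through a point $u\in\Lambda$''); your energy--sign separation argument ($C_X(D_{\va_n})\subset\{I_n\gl 0\}$ while $\partial B(0,r_1)\cap Y\subset\{I_n<0\}$, forcing $\Sigma'\subset B(0,r_1)\cap Y$) is a legitimate filling-in of that step rather than a different proof.
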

\begin{proof}
According to the proof process in Lemma \ref{y26}, there must be a ray passing through the origin and passing through a point $u\in\Lambda$, and so,
$$I_n(\omega_n)\ls\max
\limits_{(t,u)\in[0,1]\times ( B(0,r_1)\cap Y)}I_n(tu).$$
Since $u\ls\psi$ for any $u\in  \bar B(0,r_1)\cap Y$, one has
$$\int_\Omega\left((
tu-\psi)^+\right)^2\,dx=0~~~~(t\in[0,1]).$$
Then,
$$I_n(\omega_n)\ls \max\limits_{(t,u)\in[0,1]\times (B(0,r_1)\cap Y)}\left(\frac{t^2}{2}\|u\|^2-\int_\Omega P(x,tu(x))\,dx\right).$$
Since $[0,1]\times (\bar B(0,r_1)\cap Y)$ is compact, by \eqref{12}, one has  $0\ls I_n(\omega_n)\ls \sigma$ for some $\si>0$.
The proof is complete.
\end{proof}

\begin{lemma}\label{y132}
\rm Let $\omega_{n}$ be the sign-changing solution of problem $(E)_{n}$ given in Lemma \ref{y26}. If  $s<2$, then the solution sequence $\{\omega_n\}$ is bounded in $H_0^1(\Omega)$, that is, there is a positive constant  $c$ ($c$ does not depend on $\varepsilon_n$), such that $\|\omega_n\|\ls c, n=1,2,\cdots$.
\end{lemma}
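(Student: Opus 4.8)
The plan is to combine the uniform energy bound of Lemma \ref{y131} with the Ambrosetti--Rabinowitz type condition $({\rm H}_4)$, tested against $\omega_n$ itself, and then to tame the one genuinely dangerous penalty term by testing the penalized equation against the obstacle $\psi$. Throughout I would write $w_n=(\omega_n-\psi)^+$, $a_n=\|\omega_n\|^2$, $b_n=\frac{1}{\varepsilon_n}\int_\Omega w_n^2\,dx$ and $d_n=\frac{1}{\varepsilon_n}\int_\Omega w_n\psi\,dx$. Since $\omega_n$ solves $(E)_n$, I first record two identities: the energy identity $I_n(\omega_n)=\frac12 a_n+\frac12 b_n-\int_\Omega P(x,\omega_n)\,dx$, together with $0\le I_n(\omega_n)\le\sigma$ from Lemma \ref{y131}; and the identity obtained by testing $(E)_n$ with $v=\omega_n$, which, using $(\omega_n-\psi)^+\omega_n=w_n^2+w_n\psi$, reads $a_n+b_n+d_n=\int_\Omega p(x,\omega_n)\omega_n\,dx$.

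Subtracting $\frac{1}{s+1}$ times the second identity from the first (the subtracted quantity being zero) and invoking $({\rm H}_4)$ to bound $\int_\Omega\big(\frac{1}{s+1}p(x,\omega_n)\omega_n-P(x,\omega_n)\big)\,dx\ge -\frac{C_0}{s+1}$ for a constant $C_0$ independent of $n$ (the integrand is nonnegative where $|\omega_n|\ge r$ and bounded where $|\omega_n|<r$), I obtain
\[
\left(\frac12-\frac1{s+1}\right)(a_n+b_n)\le \sigma+\frac{C_0}{s+1}+\frac{1}{s+1}d_n .
\]
Since $\frac12-\frac1{s+1}=\frac{s-1}{2(s+1)}>0$, the sole obstruction is the penalty cross-term $d_n$, which carries the blow-up factor $\varepsilon_n^{-1}$ and cannot be absorbed into $b_n$ by Young's inequality. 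This is the main difficulty of the proof.

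I would resolve it by exploiting that $\psi\in H_0^1(\Omega)$ is itself an admissible test function: testing $(E)_n$ with $v=\psi$ gives the exact identity
\[
d_n=\int_\Omega p(x,\omega_n)\psi\,dx-\langle\omega_n,\psi\rangle ,
\]
which expresses the dangerous $\varepsilon_n^{-1}$-term without any negative power of $\varepsilon_n$. Estimating the right-hand side by $({\rm H}_1)$, H\"older's inequality, and the embedding $H_0^1(\Omega)\hookrightarrow L^{s+1}(\Omega)$ (valid since $s+1<2^*$), together with $|\langle\omega_n,\psi\rangle|\le\|\psi\|\,a_n^{1/2}$, yields $d_n\le C_2\,a_n^{s/2}+C_3\,a_n^{1/2}+C_4$ with constants independent of $n$.

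Finally I would substitute this bound into the displayed inequality and discard the nonnegative term $b_n$, leaving
\[
c_*\,a_n\le C_5+C_6\,a_n^{s/2}+C_7\,a_n^{1/2},\qquad c_*=\frac{s-1}{2(s+1)}>0 .
\]
Here the hypothesis $s<2$ is decisive: both exponents $s/2$ and $1/2$ are strictly less than $1$, so if $a_n$ were unbounded along a subsequence, dividing by $a_n$ and letting $n\to\infty$ would force $c_*\le 0$, a contradiction. Hence $\|\omega_n\|^2=a_n$ is bounded by a constant depending only on $\sigma$, the structural constants in $({\rm H}_1)$ and $({\rm H}_4)$, the Sobolev constant $\mathcal{S}_{s+1}$, and $\|\psi\|$, none of which involve $\varepsilon_n$, which is exactly the assertion.
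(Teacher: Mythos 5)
Your proof is correct and follows essentially the same route as the paper's: combine the uniform bound on $I_n(\omega_n)$ with $({\rm H}_4)$ and the identity from testing $(E)_n$ with $\omega_n$, observe that the quadratic penalty term enters with a favorable sign so that only the cross term $\frac{1}{\varepsilon_n}\int_\Omega(\omega_n-\psi)^+\psi$ survives, eliminate that term by testing $(E)_n$ with $v=\psi$, and close using $s<2$. Your bookkeeping with $a_n,b_n,d_n$ is in fact slightly cleaner than the paper's manipulation on the set $\Omega_n=\{\omega_n\ge\psi\}$ (where the paper tacitly enlarges a coefficient in a way that presumes a sign for $\int_{\Omega_n}(\omega_n-\psi)\psi$), but the underlying argument is the same.
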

\begin{proof} By Lemma 3.1 we have
$$\frac{1}{2}\int_\Omega|\na \om_n|^2+\frac{1}{\va_n}\int_\Om \Big(\int_0^{\om_n(x)}(s-\psi(x))^+ds\Big)dx\ls \si+\int_\Om P(x, \om_n(x))dx.$$
So, by (H$_4)$,
$$\frac{1}{2}\int_\Omega|\na \om_n|^2+\frac{1}{\va_n}\int_\Om \Big(\int_0^{\om_n(x)}(s-\psi(x))^+ds\Big)dx\ls\mbox{const}+\frac{1}{s+1}\int_\Om p(x, \om_n(x))\om_n(x)dx.$$
Thus, as $\om_n$ is a solution of $(E)_{n}$ we have
\begin{equation*}
\begin{aligned}
\big(\frac{1}{2}-\frac{1}{s+1}\big)\int_\Omega|\na \om_n|^2
\ls \mbox{const}+\frac{1}{(s+1)\va_n}\int_\Om (\om_n(x)-\psi(x))^+\om_n(x)\\
-\frac{1}{\va_n}\int_\Om \Big(\int_0^{\om_n(x)}(s-\psi(x))^+ds\Big)dx.
\end{aligned}
\end{equation*}
Set $\Om_n=\{x\in \Om: \om_n(x)\gl \psi(x)\}$ we have
\begin{equation*}
\begin{aligned}
\big(\frac{1}{2}-\frac{1}{s+1}\big)\int_\Omega|\na \om_n|^2
\ls \mbox{const}+\frac{1}{\va_n}\Big\{\frac{1}{(s+1)}\int_{\Om_n} (\om_n(x)-\psi(x))\om_n(x)\\
-\frac{1}{2}\int_{\Om_n}(\om_n(x)-\psi(x))^2ds\Big\}.
\end{aligned}
\end{equation*}
Since $s>1$,
\begin{equation}
\begin{aligned}
\int_\Omega|\na \om_n|^2
\ls \mbox{const}+\frac{1}{2\va_n}\int_{\Om_n} (\om_n(x)-\psi(x))\psi(x).
\end{aligned}
\end{equation}
Taking $v=\psi$ as a test function in $(E)_n$ we have
\begin{equation}
\begin{aligned}
\frac{1}{\va_n}\int_{\Om_n} (\om_n(x)-\psi(x))\psi(x)=-\frac{1}{2}\int_{\Om_n}\na \om_n \na\psi+\int_{\Om_n}p(x, \om_n(x))\psi(x).
\end{aligned}
\end{equation}
By (H$_2)$, (3.1) and (3.2) and the fact that $\psi\in L^q(\Om)$ for all $q\in (2, 2^*)$, we have
\begin{equation*}
\begin{aligned}
\int_\Omega|\na \om_n|^2\ls \mbox{const}\left(1+\left(\int_\Om|\om_n|^{2^*}\right)^{\frac{s}{2^*}}\right)
\end{aligned}
\end{equation*}
and by the continuous embedding of $H_0^1(\Om)$ into $L^{2^*}(\Om)$, $\|\om_n\|^2\ls \mbox{const}(1+\|\om_n\|^s)$, so $\{\om_n\}$ is bounded in $H_0^1(\Om)$ since $s<2$.
 The proof is complete.
\end{proof}

\begin{lemma}\label{y32}
There exists $\rho_1>0,~\alpha_+>0, \alpha_->0$, such that for any $\varepsilon_n>0,~\|\omega_{n}\|\gl \rho_1,~\|\omega_{n}^+\|\gl \alpha_+,~ \|\omega_{n}^-\|\gl\alpha_-$~.
\end{lemma}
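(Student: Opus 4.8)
The plan is to derive, for each part $\om_n^{\pm}$ of the sign-changing solution, a self-contained inequality of the form
$\|\om_n^{\pm}\|^2\ls \de\,\mathcal{S}_2^2\|\om_n^{\pm}\|^2+C_\de\,\mathcal{S}_{s+1}^{s+1}\|\om_n^{\pm}\|^{s+1}$
by testing the weak form of $(E)_n$ against $v=\om_n^+$ and $v=-\om_n^-$ respectively. Since $\om_n$ is sign-changing, Lemma \ref{y26} gives $\om_n\nt P\cup(-P)$, so both $\om_n^+$ and $\om_n^-$ are nonzero; after choosing $\de$ small and dividing by the positive quantity $\|\om_n^{\pm}\|^2$ we will obtain $\|\om_n^{\pm}\|^{s-1}$ bounded below by a positive constant independent of $n$, which is exactly the desired lower bound because $s>1$.

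First I would observe that $\psi\gl 0$ a.e. in $\Om$: taking $u=0\in\bar B(0,r_1)\cap Y$ in ${\rm(H_5)}$ gives $0\ls\psi$. This fact is the crux of the negative-part estimate. Testing $(E)_n$ with $v=\om_n^+$, the penalty term $\frac{1}{\va_n}\int_\Om(\om_n-\psi)^+\om_n^+$ is a product of two nonnegative functions, hence $\gl 0$, and since $\na\om_n\cdot\na\om_n^+=|\na\om_n^+|^2$ a.e. the identity reduces to $\|\om_n^+\|^2\ls\int_\Om p(x,\om_n)\om_n^+=\int_{\{\om_n>0\}}p(x,\om_n^+)\om_n^+$. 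Applying \eqref{11} together with the Sobolev inequalities $|\om_n^+|_2\ls\mathcal{S}_2\|\om_n^+\|$ and $|\om_n^+|_{s+1}\ls\mathcal{S}_{s+1}\|\om_n^+\|$ (valid since $s+1\in(2,2^*)$) yields the target inequality for $\om_n^+$.

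Next I would test $(E)_n$ with $v=-\om_n^-$. One has $-\om_n^-=\om_n$ on $\{\om_n<0\}$ and $-\om_n^-=0$ elsewhere, so $\na\om_n\cdot\na(-\om_n^-)=|\na\om_n^-|^2$ a.e. The penalty contribution equals $\frac{1}{\va_n}\int_{\{\om_n<0\}}(\om_n-\psi)^+\om_n$, and here $\psi\gl 0$ forces $\om_n<0\ls\psi$ on $\{\om_n<0\}$, so $(\om_n-\psi)^+\equiv 0$ there and the penalty term vanishes identically. Hence $\|\om_n^-\|^2=\int_{\{\om_n<0\}}p(x,\om_n)\om_n\ls\int_\Om\bigl(\de|\om_n^-|^2+C_\de|\om_n^-|^{s+1}\bigr)$, using $|\om_n|=\om_n^-$ on that set, and the same Sobolev estimates give the target inequality for $\om_n^-$.

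Finally, choosing $\de$ so that $\de\,\mathcal{S}_2^2<\tfrac12$ and dividing the two inequalities by $\|\om_n^+\|^2>0$ and $\|\om_n^-\|^2>0$ produces $n$-independent constants $\al_+,\al_->0$ with $\|\om_n^+\|\gl\al_+$ and $\|\om_n^-\|\gl\al_-$. Because $\na\om_n^+$ and $\na\om_n^-$ have disjoint supports, $\|\om_n\|^2=\|\om_n^+\|^2+\|\om_n^-\|^2\gl\al_+^2+\al_-^2$, so one may take $\rho_1=(\al_+^2+\al_-^2)^{1/2}$. I expect the main obstacle to be the sign of the penalty term in the negative-part test: it is resolved precisely by the observation $\psi\gl 0$ extracted from ${\rm(H_5)}$, without which that term would enter with the unfavorable sign and block the upper estimate on $\|\om_n^-\|^2$.
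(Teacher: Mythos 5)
Your proposal is correct and follows essentially the same route as the paper: test $(E)_n$ with $\om_n^{\pm}$, discard the (nonnegative or vanishing) penalty term, apply the growth estimate \eqref{11} with the Sobolev embeddings, and use $s+1>2$ together with $\om_n^{\pm}\neq 0$ to extract $n$-independent lower bounds. Your two small deviations are actually tidier than the paper's version: you obtain $\rho_1$ from $\|\om_n\|^2=\|\om_n^+\|^2+\|\om_n^-\|^2$ instead of a separate test with $\om_n$, and you make explicit the fact $\psi\gl 0$ (from $0\in\bar B(0,r_1)\cap Y$ in ${\rm(H_5)}$) that the paper uses only implicitly when dropping the penalty terms.
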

\begin{proof}
Since $\omega_{n}$ is the sign-changing solution of $(E)_n$, taking $\omega_{n}$ as the test function, we get
\begin{equation*}
\begin{aligned}
\|\omega_{n}\|^2+\frac{1}{\varepsilon_n}\int_\Omega\left(
\omega_{n}-\psi\right)^+\omega_{n}&=\int_\Omega p(x,\omega_{n})\omega_{n}\\
&\ls \delta\int_{\Omega}|\omega_{n}|^2+C_\delta\int_{\Omega}
|\omega_{n}|^{s+1}\\
&\ls \delta\mathcal{S}_{2}\|\omega_{n}\|^2+C_\delta\mathcal{S}_{s+1}
\|\omega_{n}\|^{s+1}.
\end{aligned}
\end{equation*}
Note that $s+1>2$ in the above equation, then there exists $\rho_1>0$, so that for any $n=1,2,\cdots$,
$$\|\omega_{n}\|\gl \rho_1.$$
Take $\omega_{n}^+$ as the test function of $(E)_n$, then
$$\int_\Omega \nabla\omega_{n}^+\cdot \nabla\omega_{n}^+ +\frac{1}{\varepsilon_n}\int_\Omega\left(
\omega_{n}-\psi\right)^+\omega_{n}^+=\int_\Omega p(x,\omega_{n}^+)\omega_{n}^+.$$
therefore,
\begin{equation*}
\begin{aligned}
\|\omega_{n}^+\|^2+\frac{1}{\varepsilon_n}\int_\Omega\left(
\omega_{n}-\psi\right)^+\omega_{n}^+&\ls\delta
\int_\Omega
|\omega_{n}^+|\cdot\omega_{n}^+ +C_\delta\int_\Omega
|\omega_{n}^+|^{s}\cdot\omega_{n}^+\\
&\ls \delta\int_{\Omega}|\omega_{n}^+|^2+
C_\delta\int_{\Omega}|\omega_{n}^+|^{s+1}\\
&\ls \delta\mathcal{S}_{2}\|\omega_{n}^+\|^2+C_\delta\mathcal{S}_{s+1}
\|\omega_{n}^+\|^{s+1}.
\end{aligned}
\end{equation*}
Since $s+1>2$, there exists $ \alpha_+>0$, so that for any $\varepsilon_n>0$,
$$\|\omega_{n}^+\|\gl \alpha_+.$$
Similarly, we know that there exists  $\alpha_->0$, such that for any $\varepsilon_n>0$,$$\|\omega_{n}^-\|\gl \alpha_-.$$
The proof is complete.
\end{proof}

\begin{lemma}\label{y33}
Assume that condition ${\rm{(}}{{\rm{H}}_{\rm{1}}}{\rm{)}}$~$\sim~$${\rm{(}}{{\rm{H}}
_{\rm{5}}}{\rm{)}}$ hold and $s<2$, then (VI) has at least one sign-changing solution.
\end{lemma}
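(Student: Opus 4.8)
The plan is to pass to the limit as $\varepsilon_n\to 0$ in the sequence of sign-changing solutions $\{\omega_n\}$ of the penalty problems $(E)_n$ produced by Lemma \ref{y26}, and to show that the limit is a sign-changing solution of (VI). By Lemma \ref{y132} the sequence $\{\omega_n\}$ is bounded in $H_0^1(\Omega)$ (this is where $s<2$ enters), so, up to a subsequence, there is $\omega\in H_0^1(\Omega)$ with $\omega_n\rightharpoonup\omega$ weakly in $H_0^1(\Omega)$, $\omega_n\to\omega$ strongly in $L^q(\Omega)$ for every $q\in[1,2^*)$, and $\omega_n\to\omega$ a.e.\ in $\Omega$. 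The task is then to verify three things: (i) $\omega\le\psi$ a.e.; (ii) $\omega$ satisfies the variational inequality; and (iii) $\omega$ genuinely changes sign.

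For the feasibility constraint (i) I would test $(E)_n$ with $v=(\omega_n-\psi)^+\in H_0^1(\Omega)$. Writing $\nabla\omega_n\cdot\nabla(\omega_n-\psi)^+=|\nabla(\omega_n-\psi)^+|^2+\nabla\psi\cdot\nabla(\omega_n-\psi)^+$, one obtains
\[
\|(\omega_n-\psi)^+\|^2+\frac{1}{\varepsilon_n}\int_\Omega\big((\omega_n-\psi)^+\big)^2
=\int_\Omega p(x,\omega_n)(\omega_n-\psi)^+-\int_\Omega\nabla\psi\cdot\nabla(\omega_n-\psi)^+.
\]
The right-hand side is bounded by $C\left(1+\|(\omega_n-\psi)^+\|\right)$ via $(H_1)$, the boundedness of $\{\omega_n\}$ and the Sobolev embedding, so $\|(\omega_n-\psi)^+\|$ stays bounded and $\frac{1}{\varepsilon_n}\int_\Omega((\omega_n-\psi)^+)^2\le C$. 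Hence $\int_\Omega((\omega_n-\psi)^+)^2\le C\varepsilon_n\to0$, and together with the a.e.\ convergence this forces $(\omega-\psi)^+=0$, i.e.\ $\omega\le\psi$ a.e.

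To recover the variational inequality (ii), fix any $v\in H_0^1(\Omega)$ with $v\le\psi$ and test $(E)_n$ with $v-\omega_n$. Splitting $v-\omega_n=(v-\psi)+(\psi-\omega_n)$, the penalty term
\[
\frac{1}{\varepsilon_n}\int_\Omega(\omega_n-\psi)^+(v-\omega_n)
=\frac{1}{\varepsilon_n}\int_\Omega(\omega_n-\psi)^+(v-\psi)-\frac{1}{\varepsilon_n}\int_\Omega\big((\omega_n-\psi)^+\big)^2\le0,
\]
since $v-\psi\le0$ and $(\omega_n-\psi)^+\ge0$. Consequently $\int_\Omega\nabla\omega_n\cdot\nabla(v-\omega_n)\ge\int_\Omega p(x,\omega_n)(v-\omega_n)$. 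The $p$-term converges to $\int_\Omega p(x,\omega)(v-\omega)$ by the growth bound $(H_1)$ and the strong $L^q$ convergence, while $\int_\Omega\nabla\omega_n\cdot\nabla v\to\int_\Omega\nabla\omega\cdot\nabla v$ and $\liminf_n\|\omega_n\|^2\ge\|\omega\|^2$ by weak lower semicontinuity; taking $\limsup_n$ and using $\int_\Omega\nabla\omega_n\cdot\nabla(v-\omega_n)=\int_\Omega\nabla\omega_n\cdot\nabla v-\|\omega_n\|^2$ then yields $\int_\Omega\nabla\omega\cdot\nabla(v-\omega)\ge\int_\Omega p(x,\omega)(v-\omega)$, which is exactly (VI).

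The main obstacle is (iii): weak convergence only gives $\|\omega^{\pm}\|\le\liminf_n\|\omega_n^{\pm}\|$, the wrong direction for transferring the uniform lower bounds $\|\omega_n^+\|\ge\alpha_+$, $\|\omega_n^-\|\ge\alpha_-$ of Lemma \ref{y32}. I would therefore upgrade to strong convergence in $H_0^1(\Omega)$. Testing $(E)_n$ with $\omega_n-\omega$ and writing $\omega_n-\omega=(\omega_n-\psi)+(\psi-\omega)$, the feasibility $\omega\le\psi$ makes the penalty term nonnegative, so $\int_\Omega\nabla\omega_n\cdot\nabla(\omega_n-\omega)\le\int_\Omega p(x,\omega_n)(\omega_n-\omega)\to0$; since $\int_\Omega\nabla\omega_n\cdot\nabla\omega\to\|\omega\|^2$, this gives $\limsup_n\|\omega_n\|^2\le\|\omega\|^2$, and combined with weak lower semicontinuity $\|\omega_n\|\to\|\omega\|$, whence $\omega_n\to\omega$ strongly in $H_0^1(\Omega)$. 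Continuity of $u\mapsto u^{\pm}$ on $H_0^1(\Omega)$ then gives $\omega_n^{\pm}\to\omega^{\pm}$, so $\|\omega^+\|\ge\alpha_+>0$ and $\|\omega^-\|\ge\alpha_->0$; hence $\omega$ changes sign and is a sign-changing solution of (VI).
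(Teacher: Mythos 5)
Your proposal is correct and follows the same overall strategy as the paper (extract a weak limit $\omega$ of the penalized sign-changing solutions, show $\omega\le\psi$, pass to the limit in the inequality, and transfer the uniform lower bounds $\|\omega_n^{\pm}\|\ge\alpha_{\pm}$ from Lemma \ref{y32} via strong convergence), but two of the sub-steps are executed by genuinely different means. For the feasibility $\omega\le\psi$, the paper does not test the equation with $(\omega_n-\psi)^+$ as you do; it instead invokes the uniform energy bound $0\le I_n(\omega_n)\le\sigma$ of Lemma \ref{y131} to read off $\frac{1}{\varepsilon_n}\,|(\omega_n-\psi)^+|_2^2\le\bar c$ directly from the expression of $I_n$. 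Your route is self-contained at the level of the Euler--Lagrange equation and avoids re-using the energy estimate here (though Lemma \ref{y131} is still needed upstream, inside the proof of the boundedness Lemma \ref{y132}). For the sign-changing property, the paper proves strong convergence of $\omega_n$ by testing with $\omega_n$ and with $\omega$ separately and comparing $\limsup$ and $\liminf$, and then runs a second, parallel argument for the positive and negative parts: it first shows $|\omega_n|\rightharpoonup|\omega|$ by a Green's-formula/duality computation, deduces $\omega_n^{\pm}\rightharpoonup\omega^{\pm}$, and then tests $(E)_n$ with $\omega_n^+$ and $\omega^+$ to get $\|\omega_n^+\|\to\|\omega^+\|$. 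Your single test with $\omega_n-\omega$, exploiting that $(\omega_n-\psi)^+(\omega_n-\omega)=((\omega_n-\psi)^+)^2+(\omega_n-\psi)^+(\psi-\omega)\ge0$ once $\omega\le\psi$ is known, gives $\limsup_n\|\omega_n\|^2\le\|\omega\|^2$ in one stroke, and appealing to the continuity of $u\mapsto u^{\pm}$ on $H_0^1(\Omega)$ then replaces the paper's entire second limiting argument. This is shorter and cleaner, at the cost of citing the (standard, but not entirely trivial) continuity of the positive-part map on $H_0^1(\Omega)$, which the paper in effect re-proves for this particular sequence. Your limit passage in the variational inequality itself, using only weak convergence and weak lower semicontinuity of the norm, is also valid and slightly more economical than the paper's, which performs it after strong convergence has been established.
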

\begin{proof}
Let $\{\omega_{n}\}$ be the solution of $(E)_n$ given in Lemma \ref{y26}. According to Lemma \ref{y132}, $\{\omega_n\}$ is bounded in $H_0^1(\Omega)$, so we may assume that on $H_0^1(\Omega)$,
\begin{equation}\label{36}
\omega_n\rightharpoonup \omega, n\to \infty.
\end{equation}

First, prove $\omega$ is the solution of $(\mbox{VI})$. Combining Lemma \ref{y131}, \ref{y132} and \eqref{12}, we can get
\begin{equation*}
\begin{aligned}
\frac{1}{\varepsilon_n}
\|(\omega_n-\psi(x))^+\|_2^2&=\sigma-\frac{1}{2}\|\omega_n\|_2^2+\int_\Omega P(x,\omega_n)\,dx\\
&\ls\sigma+\delta\int_\Omega
|\omega_n|^2\,dx+C_\delta\int_\Omega
|\omega_n|^{s+1}\,dx\\
&\ls\sigma+ \delta\mathcal{S}_{2}\|\omega_n\|^2+C_\delta\mathcal{S}_{s+1}
\|\omega_n\|^{s+1}\\
&\ls \bar{c},
\end{aligned}
\end{equation*}
that is $|(\omega_n-\psi(x))^+|_2\ls \sqrt{\bar{c}\varepsilon_n}$. Hence, when
$\varepsilon_n\to 0$,
$$\left(
\omega_n-\psi(x)\right)^+\to 0.$$
By \eqref{36}, we can know on $L^2(\Omega)$,
$$\left(
\omega_n-\psi\right)^+\to\left(
\omega-\psi\right)^+\  \mbox{as}\ ~n\to \infty.$$
So that we deduce that $\left(
\omega-\psi\right)^+=0$ a.e. in $\Omega$, that is $\omega\ls \psi$ a.e. in $\Omega$.

Now we claim $\{\omega_n\}$ strongly converges to $\omega$ as $n\to \infty.$ Taking $\omega_n$ as a test function in $(E)_n$ we have
\begin{equation}\label{37}
\|\omega_n\|^2+\frac{1}{\varepsilon_n}
\int_\Omega\left(
\omega_n-\psi\right)^+\omega_n=\int_\Omega p(x,\omega_n)\omega_n.
\end{equation}
While, taking $\omega$ as a test function in $(E)_n$ we obtain
\begin{equation}\label{38}
\int_\Omega \nabla\omega_n\cdot \nabla\omega +\frac{1}{\varepsilon_n}\int_\Omega\left(
\omega_n-\psi\right)^+\omega=\int_\Omega p(x,\omega_n)\omega.
\end{equation}
Passing to the limit as $n\to \infty$ in the latter two relations and taking into account \eqref{36} and the dominated convergence theorem, we have
\begin{equation*}
\begin{aligned}
\varlimsup_{n\to \infty}\|\omega_n\|^2&\ls\int_\Omega p(x,\omega)\omega-\varliminf_{n\to \infty}\frac{1}{\varepsilon_n}\int_\Omega\left(
\omega_n-\psi\right)^+\omega_n\\
&=\|\omega\|^2+\lim_{n\to \infty}\frac{1}{\varepsilon_n}\int_\Omega\left(
\omega_n-\psi\right)^+\omega-\varliminf_{n\to \infty}\frac{1}{\varepsilon_n}\int_\Omega\left(
\omega_n-\psi\right)^+\omega_n\\
&=\|\omega\|^2+\varlimsup_{n\to \infty}\left[\frac{1}{\varepsilon_n}\int_\Omega\left(
\omega_n-\psi\right)^+(\omega-\psi)-\frac{1}{\varepsilon_n}
\int_\Omega
\left(\left(
\omega_n-\psi\right)^+\right)^2\right]\\
&\ls\|\omega\|^2\ls\varliminf_{n\to \infty}\|\omega_n\|^2,
\end{aligned}
\end{equation*}
here we also use the weak l.s.c of the norm and the fact that $\omega\ls\psi$ a.e. in $\Omega$. Hence, $$\|\omega_n\|\to\|\omega\|\ \mbox{as}\ ~n\to \infty.$$
Since $H_0^1(\Omega)$ is a separable Hilbert space, the claim is proved.

Now we are ready to show that $\omega$ is a solution of (VI). Since $\omega_n$ is the sign-changing solution of $(E)_n$, taking $v-\omega_n$, with $v\in H_0^1(\Omega)$ and $v\ls\psi$ a.e.in $\Omega$, as a  test function, we have
\begin{equation*}
\int_\Omega \nabla\omega_n \nabla(v-\omega_n) +\frac{1}{\varepsilon_n}\int_\Omega\left(
\omega_n-\psi\right)^+(v-\omega_n)=\int_\Omega p(x,\omega_n)(v-\omega_n).
\end{equation*}
Thanks to the choice of $v$, one has
$$\frac{1}{\varepsilon_n}\int_\Omega\left(
\omega_n-\psi\right)^+(v-\omega_{n})\ls \frac{1}{\varepsilon_n}\int_\Omega\left(
\omega_{n}-\psi\right)^+(\psi-\omega_{n})\ls 0,$$
Therefore, \begin{equation}\label{39}
\int_\Omega \nabla\omega_n \nabla(v-\omega_n)\gl\int_\Omega p(x,\omega_n)(v-\omega_n).
\end{equation}
Passing to the limit in \eqref{39}, we obtain
$$\int_\Omega \nabla\omega \nabla(v-\omega) \gl\int_\Omega p(x,\omega)(v-\omega).$$
Hence, $\omega$ is a solution of  (VI).

Since
$$\int_\Omega \nabla\omega_n\cdot \nabla\omega_n=\int_\Omega \nabla\omega_n^+\cdot \nabla\omega_n^+ +\int_\Omega \nabla\omega_n^-\cdot \nabla\omega_n^-,$$  that is $$\|\omega_n \|^2=\|\omega_n^+\|^2+\|\omega_n^-\|^2,$$
$\{\omega_n^+\},\{\omega_n^-\}$ are bounded.

Next, we shall prove $\omega_n^+\rightharpoonup \omega^+,\omega_n^-\rightharpoonup \omega^-.$ In fact, since  $\omega_n\to \omega$ in $H_0^1(\Omega)$, there is also $\omega_n\to \omega$ in $L^2(\Omega)$. For any $\varphi\in C_0^{\infty}(\Omega)$, using Green's formula and the dominated convergence theorem, one has
\begin{equation*}
\begin{aligned}
\left\langle \omega_n,\varphi\right\rangle&=\int_{\Omega}\nabla|\omega_n|\cdot\nabla\varphi
=-\int_{\Omega}|\omega_n|\Delta\varphi\\
&\to-\int_{\Omega}|\omega|\Delta\varphi=\int_{\Omega}\nabla|\omega|\cdot\nabla\varphi\\
&=\left\langle |\omega|,\varphi\right\rangle.
\end{aligned}
\end{equation*}
Since $C_0^{\infty}(\Omega)$ is dense in $H_0^1(\Omega)$, as known from Riesz theorem, $|\omega_n|\rightharpoonup|\omega|$. Due to
$$\omega^+=\frac{1}{2}(|\omega|+\omega),
\omega^-=\frac{1}{2}(|\omega|-\omega),$$
and $\omega_n\rightharpoonup \omega$, we can get
$\omega_n^+\rightharpoonup \omega^+,\omega_n^-\rightharpoonup \omega^-.$

Taking $\omega_n^+$ as a test function in $(E)_n$ we have
\begin{equation}\label{310}
\int_\Omega \nabla\omega_n\cdot \nabla\omega_n^+ +\frac{1}{\varepsilon_n}\int_\Omega\left(
\omega_n-\psi\right)^+\omega_n^+=\int_\Omega p(x,\omega_n)\omega_n^+.
\end{equation}
Taking $\omega^+$  as a test function in $(E)_n$ we obtain
\begin{equation}\label{311}
\int_\Omega \nabla\omega_n^+\cdot \nabla \omega^+ +\frac{1}{\varepsilon_n}\int_\Omega\left(
\omega_n-\psi\right)^+\omega^+=\int_\Omega p(x,\omega_n)\omega^+.
\end{equation}
Passing to the limit as $n\to \infty$ in the latter two relations, we have
\begin{equation*}
\begin{aligned}
\varlimsup_{n\to \infty}\|\omega_n^+\|^2&\ls\int_\Omega p(x,\omega_n)\omega_n^+ -\varliminf_{n\to \infty}\frac{1}{\varepsilon_n}\int_\Omega\left(
\omega_n-\psi\right)^+\omega_n^+\\
&=\|\omega^+\|^2+\lim_{n\to \infty}\frac{1}{\varepsilon_n}\int_\Omega\left(
\omega_n-\psi\right)^+\omega_n^+ -\varliminf_{n\to \infty}\frac{1}{\varepsilon_n}\int_\Omega\left(
\omega_n-\psi\right)^+\omega_n^+\\
&=\|\omega^+\|^2+\varlimsup_{n\to \infty}\left[\frac{1}{\varepsilon_n}\int_\Omega\left(
\omega_\varepsilon-\psi\right)^+(\omega^+ -\psi)-\frac{1}{\varepsilon_n}\int_\Omega
\left(\left(
\omega_n-\psi\right)^+\right)^2\right]\\
&\ls\|\omega^+\|^2\ls\varliminf_{n\to \infty}\|\omega_n^+\|^2.
\end{aligned}
\end{equation*}
Therefore, $\|\omega_n^+\|\to \|\omega^+\|$. Since $H_0^1(\Omega)$ is a separable Hilbert space, we get
$$\omega_n^+\to \omega^+~~\text{as}~~n\to \infty.$$
Similarly, it can be proved that on $H_0^1(\Omega)$,
$$\omega_n^-\to \omega^-~~\text{as}~~n\to \infty.$$
Also $\|\omega^+\|\gl\alpha_+>0,~\|\omega^-\|\gl\alpha_->0$, so $\omega=\omega^+-\omega^-$ is a sign-changing solution of (VI). The proof is complete.
\end{proof}

$\bm{{Proof~~of~~Theorem~~1.1.}}$

\begin{proof}
From Lemma \ref{y33}, we can see that problem (VI) has at least one sign-changing solution. For the proof process of the existence of its positive solution; see [6]. The existence of negative solution can be directly proved using the Mountain Pass Lemma on closed convex sets. The proof is complete.
\end{proof}

\begin{remark}\label{R13}
\rm In [6], M.Grardi, L.Mastroeni and M.Matzeu gave the regularity result of positive solution for (VI). In [1], Michele Matzeu and Raffaella Servadeil also gave the regularity results of positive solution for variational inequality problems with gradient terms, while the regularity results for negative solution and sign-changing solution have not yet been obtained.
\end{remark}




\begin{thebibliography}{10}

\bibitem{1}  Michele Matzeu, Raffaella Servadei, Semilinear elliptic variational inequalities with dependence on the gradient via Mountain Pass techniques, \emph{Nonlinear Analysis}, \textbf{72} (2010), 4347-4359.



\bibitem{2}  M.Girardi, L.Mastroeni, M.Matzeu, Existence and Results for non-negative solutions of some semilinear elliptic variational inequalities via mountain pass techniques, \emph{J. Theor. Biol.}, \textbf{20} (2001), No.4, 845-857.




\bibitem{8}  P. Magrone, D. Mugnai, R. Servadei, Multiplicity of solutions for semilinear variational inequalities via linking and $\nabla$-theorems,
\emph{ J. Differential Equations.} \textbf{228} (1) (2006), 191-225.

 \bibitem{9}  M. Matzeu, R. Servadei, A Linking type method to solve a class of semilinear elliptic variational inequalities,
 \emph{Adv. Nonlinear Stud.}  \textbf{2} (1)(2002), 1-17.




\bibitem{10}  R. Servadei, E.Valdinoci, A multiplicity result for a class of nonlinear variational inequalities,
 \emph{Nonlinear Stud.}  \textbf{12(1)} (2005), 37-48.


 \bibitem{14} Mario Girardi, Loretta Mastroeni, Michele Matzeu. Existence and Regularity Results for Non-Negative Solutions of some Semilinear Elliptic Variational Inequalities via Mountain Pass Techniques. \emph{Z.Anal. Anwendungen}, \textbf{20(4)} (2001), 845-857.
\bibitem{s25}Stampacchia G.: Formes bilineaires coercivites sur les ensembles convexes. CR. Acad. Sci. 258, 4413-4416 (1964).
\bibitem{s25} Liu Zhaoli, Sun Jingxian, Invariant sets of descending flow in critical point theory with applications to nonlinear differential equations, J. Differential Equations 172 (2001), no. 2, 257-299.
 \bibitem{s25}Liu Jiaquan, Liu Xiangqing, Wang Zhi-Qiang, Sign-changing solutions for a parameter-dependent quasilinear equation,
Discrete Contin. Dyn. Syst. Ser. S 14 (2021), no. 5, 1779-1799.
\bibitem{s25}Zhang Zhitao, Perera Kanishka, Sign changing solutions of Kirchhoff type problems via invariant sets of descent flow, J. Math. Anal. Appl. 317 (2006), no. 2, 456-463.
\bibitem{s25}Zhang Zhitao, Chen Jianqing, Li Shujie,
Construction of pseudo-gradient vector field and sign-changing multiple solutions involving $p$-Laplacian, J. Differential Equations, 201(2004), no.2, 287-303.
\bibitem{s25}Bartsch Thomas,  Liu Zhaoli,
On a superlinear elliptic $p$-Laplacian equation. J. Differential Equations, 198(2004), no.1, 149-175.
\bibitem{s25}Dancer, E. N., Zhang, Zhitao,
Fucik spectrum, sign-changing, and multiple solutions for semilinear elliptic boundary value problems with resonance at infinity. J. Math. Anal. Appl.250(2000), no.2, 449-464.

\end{thebibliography}
\end{document}